\newcommand{\eps}{\varepsilon}
\newcommand{\vect}{\text{span}}
\newcommand{\gen}{\text{Gen}}
\DeclareMathOperator{\prt}{par} 
\DeclareMathOperator{\Chi}{Chi}
\DeclareMathOperator{\Orb}{Orb}
\newcommand{\rt}{{\tt{root}}}
\def\RR{\mathbb R}
\def\NN{\mathbb N}
\def\ZZ{\mathbb Z}
\def\CC{\mathbb C}
\def\KK{\mathbb K}
\theoremstyle{plain}
\newtheorem{theorem}{Theorem}[section]
\newtheorem{lemma}[theorem]{Lemma}
\newtheorem{corollary}[theorem]{Corollary}
\newtheorem{proposition}[theorem]{Proposition}
\theoremstyle{definition}
\newtheorem{example}[theorem]{Example}
\theoremstyle{remark}
\newtheorem{remark}[theorem]{Remark}
\newtheorem{problem}[theorem]{Problem}
\numberwithin{equation}{section}
\title{Dynamics of weighted shifts on directed trees}
\author{Karl-G. Grosse-Erdmann and Dimitris Papathanasiou}
\address{Karl-G. Grosse-Erdmann, 
Département de Math\'ematique, Universit\'e de Mons, 20 Place du Parc, 7000 Mons, Belgium}
\email{kg.grosse-erdmann@umons.ac.be}
\address{Dimitris Papathanasiou, Euler International Mathematical Institute, Pesochnaya Naberezhnaya 10, Saint Peterburg, Russia, 197376.}
\email{dpapath@bgsu.edu}
\keywords{Weighted shift operator, directed tree, hypercyclic operator, weakly mixing operator, mixing operator}
\subjclass[2010]{47A16, 05C05, 47B37, 46A45}
\begin{document}

\begin{abstract}
We study the dynamical behaviour of weighted shifts defined on sequence spaces of a directed tree. In particular, we characterize their boundedness as well as when they are hypercyclic, weakly mixing and mixing.
\end{abstract}

\maketitle

\section{Introduction}

\subsection{Weighted shifts in linear dynamics} As Héctor N. Salas famously said, ``the class of weighted shifts is a favorite testing ground for operator-theorists'', \cite{Salas}. This is particularly true in linear dynamics, where shift operators play a central, and sometimes crucial, role. The reason is that their simple definition allows to compute explicitly the full orbit of a vector, while the class is rather flexible since one may choose both the weights and the underlying space. As a consequence, weighted shifts provide important examples and counter-examples in linear dynamics like a hypercyclic operator whose adjoint is also hypercyclic \cite{Sal91}, or a frequently hypercyclic operator that is not chaotic \cite{BaGr07}. They are also a key element in constructing hypercyclic operators on arbitrary infinite-dimensional separable Banach spaces \cite{Ans97}, \cite{Ber99}. In addition, some natural operators on spaces of analytic functions like the differentiation operator $D$, the multiplication operator $M_z$, or the Bergman backward shift are weighted shift operators, see \cite{Shi74}, \cite{BoSh00}, \cite[Sections 4.1, 4.4]{GrPe11}.

On the other hand, some of the counter-examples provided by weighted shifts are rather complicated. For example, one may wonder whether there is not a more elementary operator than the one constructed in \cite{BaRu15} (see also \cite{BoGE18}) that is upper frequently hypercyclic but not frequently hypercyclic. In some cases, weighted shifts fail completely: Menet's recent deep construction of a chaotic operator that is not frequently hypercyclic \cite{Men17} could not have been done with a weighted backward shift, see \cite[Corollary 9.14]{GrPe11}.

It is thus of great practical and theoretical interest to enlarge the class of weighted shift operators. Now, these operators are not only determined by their weights and their underlying space, but also by the set of indices for the sequences they act upon: traditionally, these are the set $\NN$ of natural numbers for the unilateral shifts, and the set $\ZZ$ of integers for the bilateral shifts. So, a natural way to extend the notion of weighted shifts is to replace these sets by directed trees.

\subsection{Linear dynamics and universality on trees} Analysis on trees goes back at least to the work of Cartier \cite{Car72}, \cite{Car73}, who introduces and studies harmonic functions on trees. His trees are not directed, but by singling out, as Cartier does, a particular vertex, one may consider them as rooted directed trees, see also \cite{ANP17}, \cite{MaRi20}. Moreover, he demands that each vertex has only a finite number of children. These trees can be regarded as discrete analogues of the complex unit disk; Cartier also defines the boundary of a tree, which is then the analogue of the complex unit circle. Hardy spaces on trees were subsequently introduced by Kor\'anyi, Picardello and Taibleson \cite{KPT87}. Motivated by work of Bourdon and Shapiro \cite{BoSh90} on the hypercyclicity of composition operators on the Hardy space, Pavone \cite{Pav92} studies hypercyclicity of composition operators on $L^p$-spaces of the boundary of a Cartier tree. In a different direction, Abakumov, Nestoridis and Picardello \cite{ANP17}, \cite{ANP21} obtain a universality result on harmonic functions on trees that is analogous to Menshov's universal trigonometric series \cite{Men45}, \cite{Men47}.

There is a growing interest in the analysis on trees, see, for example, \cite{GaSo05} and \cite{CoMa17} and the literature cited therein.
 
\subsection{Weighted shifts on trees} In this paper we study the dynamical behaviour of shift operators on trees. Perhaps surprisingly, weighted shifts on arbitrary directed trees were only introduced and studied quite recently in the work of Jab{\l}o\'nski, Jung and Stochel \cite{JJS12}; these authors provide a detailed study of their operator theoretic properties on the $\ell^2$-space of the tree. Martínez-Avenda\~{n}o \cite{Mar17} then initiated the study of the dynamical properties of these operators on arbitrary $\ell^p$-spaces; Martínez-Avenda\~{n}o and Rivera-Guasco \cite{MaRi20} consider equally the unweighted shift on the Lipschitz space of the tree. 

In the classical case of the trees $\NN$ and $\ZZ$, Salas \cite{Salas} characterized hypercyclicity and weak mixing on the $\ell^p$-spaces, while Costakis and Sambarino \cite{CoSa} obtained the corresponding result for the mixing property. It is the aim of this paper to solve completely the problem of characterizing when a weighted shift on a directed tree is hypercyclic, weakly mixing or mixing, provided that the underlying space is of type $\ell^p$ or $c_0$. We thereby complete the study started by Martínez-Avenda\~{n}o \cite{Mar17}.
 
Let us add a word of caution. While Jab{\l}o\'nski et al.\ \cite{JJS12} consider weighted shifts on the (unweighted) $\ell^2$-space, Martínez-Avenda\~{n}o \cite{Mar17} has chosen to study (unweighted) shifts on weighted $\ell^p$-spaces. It is well known, as explained in \cite{Shi74}, \cite[Section 4.1]{GrPe11} for the trees $\NN$ and $\ZZ$ and mentioned in \cite{Mar17} for arbitrary trees, that these are merely two different representations of the ``same operator''. Motivated by the classical theory, we will consider here both representations in parallel. However, in order to make the transition between the two formulations more transparent we will employ a notation that differs somewhat from that of Martínez-Avenda\~{n}o, as will be explained in Section \ref{s-shift}.

\subsection{Linear dynamics} We proceed by providing the basic notions from linear dynamics that we will consider. For further details, in particular for the explanation of some additional notions mentioned above, we refer the reader to the books \cite{BM09} and \cite{GrPe11}. A bounded (linear) operator $T$ on a Banach space $X$ is called \textit{hypercyclic} provided there is some $x\in X$ whose orbit under $T$,
\[
\Orb(x,T)=\{x, Tx, T^2x,\ldots \},
\]
is dense in $X$. If furthermore the operator defined by
\[
(T\oplus T)(x,y)=(Tx, Ty), \quad (x,y)\in X\times X
\]
is hypercyclic, then $T$ is called \textit{weakly mixing}. In the context of separable Banach spaces, hypercyclicity is equivalent to \textit{topological transitivity} which requires that for every pair $U$ and $V$ of non-empty open subsets of $X$ there is some $n\in \NN$ such that $T^n(U)\cap V \neq \varnothing$. If in addition, $T^k(U)\cap V\neq \varnothing$ for all $k\geq n$, then $T$ will be called \textit{mixing}. The main tool for proving the weak mixing and mixing properties of an operator is the following, known as the \textit{Hypercyclicity Criterion}, see \cite[Theorem 3.12]{GrPe11}; note, however, that there are hypercyclic operators that are not weakly mixing, see \cite{DeRe09}, \cite{BaMa07}.

\begin{theorem}[Hypercyclicity Criterion]\label{t-hypcrit}
Let $X$ be a separable Banach space and $T$ a bounded operator on $X$. Assume that there exist dense subsets $X_0, Y_0$ of $X$, an increasing sequence $(n_k)_k$ of positive integers, and maps $R_{n_k}:Y_0\rightarrow X$, $k\geq 1$, such that, for any $x\in X_0$ and $y\in Y_0$,
\begin{enumerate}[{\rm (i)}]
    \item $T^{n_k}x\rightarrow 0$,
    \item $R_{n_k}y\rightarrow 0$, and
    \item $T^{n_k}R_{n_k}y\rightarrow y$
\end{enumerate}
as $k\to\infty$. Then $T$ is weakly mixing. 

If the conditions hold for the full sequence $(n_k)_k=(n)_n$ then $T$ is mixing.
\end{theorem}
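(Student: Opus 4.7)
The plan is to establish weak mixing by verifying topological transitivity of $T \oplus T$ on $X \times X$. Since $X$ is a separable Banach space, the product $X \times X$ is likewise separable and Banach, so hypercyclicity of $T \oplus T$ is equivalent to topological transitivity on this product space. A basic open set in $X \times X$ has the form $U_1 \times U_2$ with $U_1, U_2$ non-empty open in $X$, so what I need to produce is an $n$ such that $T^n(U_1) \cap V_1 \neq \varnothing$ and $T^n(U_2) \cap V_2 \neq \varnothing$ simultaneously, for any four non-empty open subsets $U_1, U_2, V_1, V_2$ of $X$.

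The construction exploits the density of $X_0$ and $Y_0$. For $i=1,2$, pick $x_i \in U_i \cap X_0$ and $y_i \in V_i \cap Y_0$. Then form the auxiliary vectors
\[
z_i^{(k)} = x_i + R_{n_k} y_i.
\]
Condition (ii) gives $R_{n_k} y_i \to 0$, so $z_i^{(k)} \to x_i$ and hence $z_i^{(k)} \in U_i$ for all sufficiently large $k$. On the other hand,
\[
T^{n_k} z_i^{(k)} = T^{n_k} x_i + T^{n_k} R_{n_k} y_i,
\]
which by (i) and (iii) converges to $0 + y_i = y_i$, placing $T^{n_k} z_i^{(k)}$ in $V_i$ for all sufficiently large $k$. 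Choosing $k$ large enough that both indices $i=1,2$ satisfy these inclusions gives $T^{n_k}(U_i) \cap V_i \neq \varnothing$ simultaneously, which is exactly what is needed for topological transitivity of $T \oplus T$. The crucial point — and such ``main obstacle'' as the proof has — is that the \emph{same} index $n_k$ works for both components because the approximations (i)--(iii) are uniform in the choice of $x \in X_0$ and $y \in Y_0$ in the sense that only finitely many such vectors are involved.

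For the mixing assertion, the argument is even simpler: assuming the hypotheses for the full sequence $(n)_n$, given any pair $U, V$ of non-empty open subsets of $X$ and any $x \in U \cap X_0$, $y \in V \cap Y_0$, the same vectors $z^{(n)} = x + R_n y$ satisfy $z^{(n)} \in U$ and $T^n z^{(n)} \in V$ for every $n$ past some threshold $N$. Hence $T^n(U) \cap V \neq \varnothing$ for all $n \geq N$, which is the mixing property.
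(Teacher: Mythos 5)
Your argument is correct and is essentially the standard one: the paper itself does not reprove Theorem \ref{t-hypcrit} (it cites \cite[Theorem 3.12]{GrPe11}), but your construction $z_i^{(k)}=x_i+R_{n_k}y_i$ yielding transitivity of $T\oplus T$, and the same vectors giving mixing for the full sequence, is exactly the argument the paper uses for its variant, Proposition \ref{p-criterion}. Nothing further is needed.
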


We mention that we will also need a variant of the criterion, see Proposition \ref{p-criterion} below.

\section{Shifts on trees}\label{s-shift}

\subsection{Directed trees}\label{subs-dirtr}

Let $(V,E)$ be a \textit{directed tree}, that is, a connected directed graph consisting of a countable set $V$ of vertices and a set $E\subset V\times V\setminus \{(v,v):v\in V\}$ of edges that has no cycles and for which each vertex $v\in V$ has at most one $w\in V$ such that $(w,v)\in E$. The terminology that we use is standard. Thus we can say that $w$ is the \textit{parent} of $v$, and the \textit{indegree} of $v$ is either 0 if it does not have a parent or 1 if it does. For details we refer to \cite{Car72}, \cite{GYZ14}, \cite{JJS12} and \cite{Mar17}; but note that in \cite{Car72} and \cite{GYZ14} trees are not directed. We only demand connectedness in order to avoid that the tree splits into a disjoint union of unrelated trees.

A directed tree has at most one vertex without a parent, called the \textit{root} of the tree and denoted $\rt$. The parent of a vertex $v\neq \rt$ is denoted by $\prt(v)$. Any vertex whose parent is $v$ is called a child of $v$, and the set of children of $v$ is denoted by $\Chi(v)$. More generally, for any $n\geq 1$,
\[
\Chi^n(v) = \Chi(\Chi(\ldots(\Chi(v)))),\quad \text{($n$ times)},
\]
with $\Chi^0(v)=\{v\}$. The \textit{outdegree} of a vertex $v$ is $|\Chi(v)|$, the cardinality of $\Chi(v)$. A \textit{leaf} is a vertex with outdegree $0$.

For two vertices $u,v\in V$, let us write $u\sim v$ if there is some $n\in \mathbb{N}_0$ such that $\prt^n(u)=\prt^n(v)$, which defines an equivalence relation on $V$. For any $v\in V$, the equivalence class $[v]$ of $v$ is called the \textit{generation} of $v$. Let $v_0 \in V$ be a fixed vertex; if the directed tree has a root, we pick $v_0=\rt$. We will enumerate the generations with respect to $v_0$ by defining
\[
\gen_n=\Chi^n(\rt), \quad n\in \NN_0,
\]
if the directed tree has a root, and
\[
\gen_n=\gen_n(v_0)= \{v\in V : \exists m\geq 0, \prt^{n+m}(v)=\prt^{m}(v_0)\},\quad n\in \ZZ,
\] 
otherwise. If there is an $n\in \mathbb{N}_0$ such that $\prt^n(v)=\prt^n(u)$ for each $u\in [v]$ then $[v]$ has a \textit{common ancestor}. We will say that an unrooted tree has a \textit{free left end} if there is some $n\in\ZZ$ such that $\gen_k$ is a singleton for all $k\leq n$. In that case, each generation has a common ancestor. We talk of a free \textit{left} end because we imagine trees to be directed from left to right, see Figure \ref{f-leftend}.

\begin{figure}[h!]%
\begin{tikzpicture}[scale=1]
\draw[->,>=latex,dashed] (0,0) - - (1,0);
\draw[->,>=latex] (1,0) - - (2,0);
\draw[->,>=latex] (2,0) - - (3,0);
\draw[->,>=latex] (3,0) - - (4,0);
\draw[->,>=latex] (4,0) - - (5,.6);\draw[fill] (5,.6) circle (1pt);
\draw[->,>=latex] (4,0) - - (5,-.6);\draw[fill] (5,-.6) circle (1pt);
\draw[->,>=latex] (4,0) - - (5,0);\draw[fill] (5,0) circle (1pt);
\draw[->,>=latex] (5,.6) - - (6,1);\draw[fill] (6,1) circle (1pt);
\draw[->,>=latex] (5,.6) - - (6,.7);\draw[fill] (6,.7) circle (1pt);
\draw[->,>=latex] (5,.6) - - (6,0.4);\draw[fill] (6,.4) circle (1pt);
\draw[->,>=latex] (5,0) - - (6,0);\draw[fill] (6,0) circle (1pt);
\draw[->,>=latex] (5,-.6) - - (6,-.4);\draw[fill] (6,-.4) circle (1pt);
\draw[->,>=latex] (5,-.6) - - (6,-.8);\draw[fill] (6,-.8) circle (1pt);
\draw[dashed] (6,1) - - (7,1.2);
\draw[dashed] (6,.7) - - (7,.7);
\draw[dashed] (6,.4) - - (7,.3);
\draw[dashed] (6,0) - - (7,0);
\draw[dashed] (6,-.4) - - (7,-.3);
\draw[dashed] (6,-.8) - - (7,-.9);
\draw[fill] (1,0) circle (1pt);
\draw[fill] (2,0) circle (1pt);
\draw[fill] (3,0) circle (1pt);
\draw[fill] (4,0) circle (1pt);
\end{tikzpicture}
\caption{An unrooted directed tree with a free left end}%
\label{f-leftend}%
\end{figure}
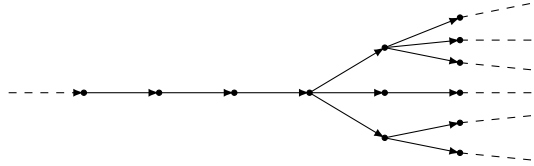

A directed tree $(V,E)$ will be called \textit{symmetric} if 
\[
|\Chi(u)|=|\Chi(v)|
\]
whenever $u\sim v$. This is equivalent to demanding that the map
\[
|\Chi(\cdot)|: V\rightarrow \NN_0\cup \{ \infty \}
\]
depends only on the generation. Let us fix $v_0\in V$, with $v_0=\rt$ in the rooted case, and let us enumerate the generations with respect to $v_0$. For $v\in \gen_n$, we set 
\[
\gamma_n=|\Chi(v)|, 
\]
which is well defined if $(V,E)$ is symmetric.

\subsection{Sequence spaces} In this paper we restrict our attention to spaces of type $\ell^p$ and $c_0$. We recall here the definition of  weighted versions of these spaces on general index sets. Thus, let $V$ be an arbitrary countable set; a possible graph structure on $V$ will not play any role in this subsection. Let $\mu=(\mu_v)_{v\in V}\in \KK^V$ be a family of non-zero (real or complex) numbers, called a \textit{weight}. For $1\leq p < \infty$, the \textit{weighted $\ell^p$-space} of $V$ is defined as
\[
\ell^p(V,\mu) = \Big\{ f\in \KK^V : \|f\|^p:= \sum_{v\in V} |f(v)\mu_v|^p <\infty\Big\}.
\]
Note that, unlike in \cite{Mar17}, the weights also carry a power $p$; in addition, it will be convenient for us not to restrict to positive weights. As usual, $\KK$ denotes either $\RR$ or $\CC$.

The space $\ell^\infty(V,\mu)$ is defined similarly. The only reason for considering this space in our paper is that it is the dual of some $\ell^1$-space; from a dynamical point of view the space is of little interest because it is non-separable (unless $V$ is finite). 

In addition we will consider the \textit{weighted $c_0$-space} of $V$,
\[
c_0(V,\mu)=\{ f\in \KK^V: \forall \eps >0, \exists F\subset V \mbox{ finite}, \forall v\in V\setminus F,  |f(v)\mu_v|<\eps\}, 
\]
normed by
\[
\|f\|=\sup_{v\in V}|f(v)\mu_v|;
\]
it is a closed subspace of $\ell^\infty(V,\mu)$.

Setting $e_v=\chi_{\{v\}}$ for $v\in V$ we observe that $\vect\{e_v: v\in V\}$ is dense in $\ell^p(V,\mu)$, $1\leq p<\infty$, and in $c_0(V,\mu)$. For any element $f\in \KK^V$, we call $\{v\in V : f(v)\neq 0\}$ its \textit{support}.

If $\mu=1$ for all $v\in V$, the corresponding unweighted spaces are denoted by
\[
\ell^p(V), \, 1\leq p\leq \infty,\quad\text{and}\quad c_0(V).
\]

Under the natural pairing
\begin{equation}\label{eq-dual}
\langle f,g \rangle =\sum_{v\in V} f(v)\overline{g(v)}
\end{equation}
we have the following dual spaces:
\[
\ell^p(V,\mu)^* = \ell^{p^*}\!(V,1/\overline{\mu}), \, 1\leq p <\infty,
\]
where $p^*$ is the conjugate exponent, and 
\[
c_0(V,\mu)^* = \ell^1(V,1/\overline{\mu}).
\]

\subsection{Shift operators on directed trees}

Throughout this subsection, let $(V,E)$ be a directed tree. We will consider weighted (forward) shifts and weighted backward shifts on our spaces. Let $\lambda=(\lambda_v)_{v\in V}$ be a family of non-zero scalars, that is, a weight. 

According to \cite{JJS12}, the \textit{weighted (forward) shift} $S_\lambda$ is defined on $\KK^V$ by
\[
(S_{\lambda}f)(v)=
\begin{cases}
\lambda_v f(\prt(v)), & \mbox{if } v\neq \rt,\\
0, & \mbox{if }   v=\rt,
\end{cases}
\]
where $v\in V$; note that the value of $\lambda_{\rt}$ is not used.

Similarly, by \cite{Mar17}, the \textit{weighted backward shift} $B_\lambda$ is defined formally on $\KK^V$ by
\[
(B_{\lambda}f)(v)=\sum_{u\in \Chi(v)}\lambda_uf(u), \quad v\in V,
\]
where, as usual, an empty sum is zero. When one considers $B_\lambda$ on a specific space $X$ one needs to ensure that all the series defining $B_\lambda f$ converge for any $f\in X$. 

If $\lambda=1$ for all $v\in V$, the corresponding unweighted shifts are denoted by $S$ and $B$.

There is an important link between the two types of shifts: backward shifts are the adjoints of forward shifts. In fact, this property has motivated the definition of backward shifts in \cite{Mar17}. However, the notation in that paper obscures the following simple fact.

\begin{proposition}
Let $\lambda$ and $\mu$ be weights.
\begin{enumerate}[{\rm (a)}]
\item If $S_\lambda$ is a bounded operator on $\ell^p(V,\mu)$, $1\leq p<\infty$, or on $c_0(V,\mu)$, then $S_\lambda^\ast=B_{\overline{\lambda}}$ on $\ell^{p^\ast}\!(V,1/\overline{\mu})$ or on $\ell^{1}(V,1/\overline{\mu})$, respectively.
\item If $B_\lambda$ is a bounded operator on $\ell^p(V,\mu)$, $1\leq p<\infty$, or on $c_0(V,\mu)$, then $B_\lambda^*=S_{\overline{\lambda}}$ on $\ell^{p^*}\!(V,1/\overline{\mu})$ or on $\ell^{1}(V,1/\overline{\mu})$, respectively.
\end{enumerate}
\end{proposition}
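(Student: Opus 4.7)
\medskip\noindent
\textbf{Proof plan.}
My plan is to prove (a) by directly computing $\langle S_\lambda f,g\rangle$ under the dual pairing \eqref{eq-dual} for $f$ in the source space and $g$ in the proposed adjoint's domain, and to show that this equals $\langle f,B_{\overline{\lambda}}g\rangle$. Since $S_\lambda$ is assumed bounded, its Banach space adjoint $S_\lambda^\ast$ exists and is uniquely determined by the identity $\langle S_\lambda f,g\rangle=\langle f,S_\lambda^\ast g\rangle$ for all admissible $f,g$, so producing this identity on a dense enough class suffices to conclude $S_\lambda^\ast=B_{\overline{\lambda}}$. The computation itself is a single reindexing: writing
\[
\langle S_\lambda f,g\rangle=\sum_{v\in V\setminus\{\rt\}}\lambda_v f(\prt(v))\overline{g(v)}
\]
and using that $V\setminus\{\rt\}$ decomposes as the \emph{disjoint} union $\bigsqcup_{u\in V}\Chi(u)$ (every non-root vertex has a unique parent), one regroups the sum as
\[
\sum_{u\in V}f(u)\sum_{v\in\Chi(u)}\lambda_v\overline{g(v)}
=\sum_{u\in V}f(u)\,\overline{(B_{\overline{\lambda}}g)(u)}
=\langle f,B_{\overline{\lambda}}g\rangle,
\]
which is exactly the required identity.

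The one point that needs attention is the legitimacy of the rearrangement. Since $S_\lambda$ is bounded from $\ell^p(V,\mu)$ (or $c_0(V,\mu)$) to itself, Hölder's inequality applied with the dual pairing gives $\sum_v|(S_\lambda f)(v)\overline{g(v)}|\le\|S_\lambda f\|\,\|g\|<\infty$, so the double sum is absolutely convergent and can be reorganized freely by Fubini/Tonelli for counting measure. This also implicitly shows that the inner sum $\sum_{v\in\Chi(u)}\overline{\lambda_v}g(v)$ converges for every $u$, so $B_{\overline{\lambda}}g$ is a well-defined element of $\KK^V$; its membership in the correct dual space is automatic, as it coincides with $S_\lambda^\ast g$, a bounded operator output.

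Part (b) is established by the mirror-image computation. Starting from
\[
\langle B_\lambda f,g\rangle=\sum_{v\in V}\Big(\sum_{u\in\Chi(v)}\lambda_u f(u)\Big)\overline{g(v)},
\]
absolute convergence (justified as above via boundedness of $B_\lambda$ and Hölder) allows swapping the order of summation; using again the partition $V\setminus\{\rt\}=\bigsqcup_v\Chi(v)$ in the opposite direction, together with the fact that each non-root $u$ has a unique parent $\prt(u)$, one rewrites the sum as
\[
\sum_{u\in V\setminus\{\rt\}}\lambda_u f(u)\overline{g(\prt(u))}
=\sum_{u\in V}f(u)\,\overline{(S_{\overline{\lambda}}g)(u)}
=\langle f,S_{\overline{\lambda}}g\rangle,
\]
which identifies $B_\lambda^\ast=S_{\overline{\lambda}}$ on the appropriate dual space. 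I do not expect any serious obstacle: the only substantive content is the bookkeeping of the tree-indexed sums, and the disjointness of the sets $\Chi(u)$ together with the uniqueness of the parent makes the reindexing completely transparent. The $c_0$ case requires no separate argument since the dual pairing and Hölder inequality apply identically.
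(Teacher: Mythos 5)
Your part (a) is correct and is exactly the paper's argument: compute the pairing, regroup the single absolutely convergent sum $\sum_{v\neq\rt}\lambda_v f(\prt(v))\overline{g(v)}$ over the partition $V\setminus\{\rt\}=\bigsqcup_{u}\Chi(u)$, and identify the result coordinatewise with $S_\lambda^\ast g$; here the Hölder bound $\sum_v|(S_\lambda f)(v)\overline{g(v)}|\le\|S_\lambda f\|\,\|g\|$ really does give absolute convergence, because each term $|(S_\lambda f)(v)|=|\lambda_v f(\prt(v))|$ is a single product.

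In part (b), however, your justification of the interchange has a genuine gap. The bound you invoke, $\sum_v|(B_\lambda f)(v)\overline{g(v)}|\le\|B_\lambda f\|\,\|g\|<\infty$, controls only $\bigl|\sum_{u\in\Chi(v)}\lambda_u f(u)\bigr|$, i.e.\ the inner sums \emph{after} possible cancellation; it does not bound $\sum_{v}\sum_{u\in\Chi(v)}|\lambda_u f(u)|\,|g(v)|$, which is what Fubini--Tonelli requires before you may swap the order of summation. This is precisely the point the paper flags with its remark that ``the rearrangements needed in these calculations can be justified by first treating $|\lambda|$, $|f|$, and $|g|$.'' The repair is short: for $f\in X$ put $h(u)=|f(u)|\,\overline{\lambda_u}/|\lambda_u|$; then $h\in X$ with $\|h\|=\|f\|$ and $(B_\lambda h)(v)=\sum_{u\in\Chi(v)}|\lambda_u||f(u)|$, so $B_{|\lambda|}|f|=B_\lambda h\in X$ and Hölder gives $\sum_v\sum_{u\in\Chi(v)}|\lambda_u f(u)||g(v)|\le\|B_{|\lambda|}|f|\,\|\,\|g\|<\infty$, after which your reindexing is legitimate. (Alternatively, one can avoid the interchange altogether by verifying $\langle B_\lambda f,g\rangle=\langle f,S_{\overline\lambda}g\rangle$ only for $f=e_u$, $u\in V$, for which the inner sum collapses to a single term; this already identifies $B_\lambda^\ast g=S_{\overline\lambda}g$ coordinatewise.) With this amendment your proof coincides with the paper's.
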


\begin{proof} It is readily verified that, with respect to the dual pairing \eqref{eq-dual},
\[
\langle S_\lambda f,g\rangle = \langle  f,B_{\overline{\lambda}} g\rangle\quad \text{and}\quad 
\langle B_\lambda f,g\rangle = \langle  f,S_{\overline{\lambda}} g\rangle,
\]
where $f$ is in the underlying space and $g$ in its dual; the rearrangements needed in these calculations can be justified by first treating $|\lambda|$, $|f|$, and $|g|$.
\end{proof}

We now characterize when a (forward or backward) shift is a bounded operator. The closed graph theorem shows that for this it suffices that the operator in question is a well-defined map from the space back into the space.

The following result generalizes Jab{\l}o\'nski et al.\ \cite[Proposition 3.1.8]{JJS12} and Martínez-Aven\-da\~{n}o \cite[Proposition 3.2]{Mar17}.

\begin{proposition}
Let $\lambda$ and $\mu$ be weights. 

{\rm (a)} Let $1\leq p < \infty$. Then the following assertions are equivalent:
\begin{enumerate}[\rm (i)]
    \item $S_\lambda$ is a bounded operator on $\ell^p(V,\mu)$;
    \item there is a constant $M>0$ such that $\sum_{u\in \Chi(v)}|\lambda_u\mu_u|^p\leq M |\mu_v|^p$, $v\in V$.
\end{enumerate}
In this case, $\|S_\lambda\|=\sup_{v\in V}\frac{1}{|\mu_v|}(\sum_{u\in \Chi(v)}|\lambda_u\mu_u|^p)^{1/p}$.

{\rm (b)} The following assertions are equivalent:
\begin{enumerate}[\rm (i)]
    \item $S_\lambda$ is a bounded operator on $\ell^\infty(V,\mu)$;
    \item there is a constant $M>0$ such that $|\lambda_v\mu_v|\leq M |\mu_{\prt(v)}|$, $v\in V\setminus\{\rt\}$.
\end{enumerate}
In this case, $\|S_{\lambda}\|=\sup_{v\neq\rt}\big|\frac{\lambda_v\mu_v}{\mu_{\prt(v)}}\big|$.

{\rm (c)} The following assertions are equivalent:
\begin{enumerate}[\rm (i)]
    \item $S_\lambda $ is a bounded operator on $c_0(V,\mu)$;
    \item there is a constant $M>0$ such that $|\lambda_v\mu_v|\leq M |\mu_{\prt(v)}|$, $v\in V\setminus\{\rt\}$, and for each $v\in V$, $(\lambda_u\mu_u)_{u\in \Chi(v)}\in c_0(\Chi(v))$. 
\end{enumerate}
In this case, $\|S_\lambda \|=\sup_{v\neq\rt}\big|\frac{\lambda_v\mu_v}{\mu_{\prt(v)}}\big|$.
\end{proposition}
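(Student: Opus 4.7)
The plan is to compute $\|S_\lambda f\|$ (or a sharp upper bound) directly from the formula $(S_\lambda f)(v) = \lambda_v f(\prt(v))$ for $v\neq\rt$, using the fact that the map $v\mapsto\prt(v)$ partitions $V\setminus\{\rt\}$ into the fibers $\Chi(w)$, $w\in V$. Necessity of the respective conditions (ii) is then extracted by applying $S_\lambda$ to the basis vector $e_v$.

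For part (a), I would reindex the defining sum by the parent:
\[
\|S_\lambda f\|^p = \sum_{v\neq\rt} |\lambda_v\mu_v|^p\,|f(\prt(v))|^p = \sum_{w\in V} |f(w)|^p \sum_{u\in\Chi(w)} |\lambda_u\mu_u|^p.
\]
If (ii) holds, the inner sum is $\leq M|\mu_w|^p$, so $\|S_\lambda f\|^p\leq M\|f\|^p$. Conversely, $S_\lambda e_v$ is supported on $\Chi(v)$ with value $\lambda_u$ at $u$, hence $\|S_\lambda e_v\|^p = \sum_{u\in\Chi(v)}|\lambda_u\mu_u|^p$ while $\|e_v\|^p = |\mu_v|^p$, and testing boundedness on $e_v$ delivers both the necessity of (ii) and the exact value $\|S_\lambda\|^p = \sup_v \frac{1}{|\mu_v|^p}\sum_{u\in\Chi(v)}|\lambda_u\mu_u|^p$.

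For part (b), the same strategy works with $\sup$ in place of $\sum$: writing
\[
\|S_\lambda f\| = \sup_{v\neq\rt} \tfrac{|\lambda_v\mu_v|}{|\mu_{\prt(v)}|}\,|f(\prt(v))\mu_{\prt(v)}|,
\]
condition (ii) gives $\|S_\lambda f\|\leq M\|f\|$, and testing on $e_{\prt(v)}$ gives the reverse bound and the stated norm formula. Part (c) has the same norm computation as (b), since $c_0(V,\mu)$ is a closed subspace of $\ell^\infty(V,\mu)$, but requires the additional verification that $S_\lambda$ actually sends $c_0(V,\mu)$ into itself. Necessity of $(\lambda_u\mu_u)_{u\in\Chi(v)}\in c_0(\Chi(v))$ is immediate from $S_\lambda e_v\in c_0(V,\mu)$; conversely, under the two assumptions in (ii), $S_\lambda$ is a bounded operator into $\ell^\infty(V,\mu)$ that maps each $e_v$ into $c_0(V,\mu)$, hence maps the dense subspace $\vect\{e_v:v\in V\}$ into $c_0(V,\mu)$, and by continuity the whole of $c_0(V,\mu)$ into $c_0(V,\mu)$.

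I do not expect any real obstacle: the proof is a bookkeeping exercise built on the crucial reindexing over fibers of $\prt$. The only mildly delicate points are (1) justifying the interchange of summations when $f$ is complex-valued, which is handled by first passing to $|f|$ as the analogous proposition for the adjoint pairing does, and (2) ensuring in (c) that both halves of condition (ii) are needed, namely that the $\ell^\infty$-type estimate alone does not guarantee $S_\lambda(c_0)\subset c_0$ without the extra vanishing of $(\lambda_u\mu_u)_{u\in\Chi(v)}$ along each fiber.
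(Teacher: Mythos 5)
Your proposal is correct and takes essentially the same approach as the paper: the key step in both is the reindexing of $\|S_\lambda f\|$ over the fibers $\Chi(w)$ of the parent map (which immediately yields (a) and (b), with the lower bound read off by testing on the $e_v$), and part (c) is handled identically via $S_\lambda e_v\in c_0(V,\mu)$, density of $\vect\{e_v:v\in V\}$, and closedness of $c_0(V,\mu)$ in $\ell^\infty(V,\mu)$. The summation-interchange concern you raise is vacuous here, since all terms in the rearranged sums are nonnegative.
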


\begin{proof} Parts (a) and (b) follow immediately from the fact that
\[
\|S_\lambda f\|_{\ell^p(V,\mu)}^p = \sum_{v\neq \rt} |\lambda_v f(\prt(v))\mu_v|^p =  \sum_{w\in V} |f(w)\mu_w|^p \Big(\frac{1}{|\mu_w|^p}\sum_{v\in\Chi(w)}|\lambda_v\mu_v|^p\Big)
\]
and
\begin{equation}\label{eq-linf}
\|S_\lambda f\|_{\ell^\infty(V,\mu)} = \sup_{v\neq \rt} |\lambda_v f(\prt(v))\mu_v| =  \sup_{w\in V} |f(w)\mu_w| \Big(\frac{1}{|\mu_w|}\sup_{v\in\Chi(w)}|\lambda_v\mu_v|\Big).
\end{equation}

As for (c), applying \eqref{eq-linf} to $e_v$, $v\in V$, and noting that $S_\lambda e_v\in c_0(V,\mu)$ shows that (i) implies (ii). Conversely, if (ii) holds then, by (b), $S_\lambda$ is a bounded operator on $\ell^\infty(V,\mu)$; moreover, by the second part of (ii), $S_\lambda e_v\in c_0(V,\mu)$ for all $v\in V$. Since $\vect\{e_v:v\in V\}$ is dense in $c_0(V,\mu)$ and $c_0(V,\mu)$ is a closed subspace of $\ell^\infty(V,\mu)$, (i) follows.
\end{proof}

We consider the analogue for backward shift operators, which improves, in particular, \cite[Proposition 4.5]{Mar17}; see also \cite[Proposition 3.4.1]{JJS12}.

\begin{proposition}\label{p-Bbounded}
Let $\lambda$ and $\mu$ be weights. 

{\rm (a)} The following assertions are equivalent:
\begin{enumerate}[\rm (i)]
    \item $B_\lambda$ is a bounded operator on $\ell^1(V,\mu)$;
    \item $\sup_{v\in V\setminus \{\rt\}} |\mu_{\prt(v)}|\big|\frac{\lambda_v}{\mu_v}\big| < \infty$.
\end{enumerate}
In this case, $\|B_\lambda\|=\sup_{v\in V\setminus \{\rt\}} |\mu_{\prt(v)}|\big|\frac{\lambda_v}{\mu_v}\big|$.

{\rm (b)} Let $1< p < \infty$. Then the following assertions are equivalent:
\begin{enumerate}[\rm (i)]
    \item $B_\lambda$ is a bounded operator on $\ell^p(V,\mu)$;
    \item $\sup_{v\in V} |\mu_v|^{p^*}\sum_{u\in \Chi(v)}\big|\frac{\lambda_u}{\mu_u}\big|^{p^*} < \infty$.
\end{enumerate}
In this case, $\|B_\lambda\|=\sup_{v\in V}|\mu_v|(\sum_{u\in \Chi(v)}\big|\frac{\lambda_u}{\mu_u}\big|^{p^*})^{1/{p^*}}$.

{\rm (c)} The following assertions are equivalent:
\begin{enumerate}[\rm (i)]
    \item $B_\lambda$ is a bounded operator on $\ell^\infty(V,\mu)$;
		\item $B_\lambda$ is a bounded operator on $c_0(V,\mu)$;
    \item $\sup_{v\in V}|\mu_v|\sum_{u\in \Chi(v)}\big|\frac{\lambda_u}{\mu_u}\big|< \infty$.  
\end{enumerate}
In this case, $\|B_\lambda\|=\sup_{v\in V}|\mu_v|\sum_{u\in \Chi(v)}\big|\frac{\lambda_u}{\mu_u}\big|$ as an operator on $\ell^\infty(V,\mu)$ and on $c_0(V,\mu)$.
\end{proposition}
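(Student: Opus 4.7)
The whole proposition follows a single scheme. Write
\[
(B_\lambda f)(v) = \sum_{u\in\Chi(v)} \frac{\lambda_u}{\mu_u}\,(\mu_u f(u)),
\]
estimate the inner sum by either the triangle inequality (for $p=1,\infty$) or by H\"older's inequality (for $1<p<\infty$), multiply by $|\mu_v|$, and then take the appropriate $\ell^p$ or sup norm in $v$. The key combinatorial observation is the disjoint decomposition $V\setminus\{\rt\}=\bigsqcup_{v\in V}\Chi(v)$: every non-root vertex has exactly one parent, so after swapping the order of summation $\sum_{v\in V}\sum_{u\in\Chi(v)}|f(u)\mu_u|^p$ collapses to $\sum_{u\ne\rt}|f(u)\mu_u|^p\le\|f\|_p^p$. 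This gives the ``$\le$'' half of each norm identity. The matching lower bound, and the detection of unboundedness when (ii) or (iii) fails, is obtained in each case by testing on vectors supported in a single $\Chi(v)$ chosen to saturate the relevant inequality, and then taking the supremum over $v$.

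For (a), the triangle inequality yields
\[
\|B_\lambda f\|_1 \le \sum_{u\ne\rt}\Big|\frac{\lambda_u\mu_{\prt(u)}}{\mu_u}\Big|\,|f(u)\mu_u| \le M\|f\|_1
\]
with $M=\sup_{u\ne\rt}|\lambda_u\mu_{\prt(u)}/\mu_u|$; testing on $e_v$ for $v\ne\rt$ recovers $M$ as a supremum. For (b), H\"older's inequality on each set $\Chi(v)$ gives
\[
|(B_\lambda f)(v)\mu_v|^p \le |\mu_v|^p\Big(\sum_{u\in\Chi(v)}|\lambda_u/\mu_u|^{p^*}\Big)^{p/p^*}\sum_{u\in\Chi(v)}|f(u)\mu_u|^p,
\]
and summing over $v$ via the decomposition above produces $\|B_\lambda f\|_p\le C\|f\|_p$ with $C:=\sup_v|\mu_v|\bigl(\sum_{u\in\Chi(v)}|\lambda_u/\mu_u|^{p^*}\bigr)^{1/p^*}$. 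The matching lower bound uses the equality case of H\"older: for fixed $v$, set $f(u)\mu_u=\overline{\lambda_u/\mu_u}\,|\lambda_u/\mu_u|^{p^*-2}$ for $u\in\Chi(v)$ and zero elsewhere; the identity $(p^*-1)p=p^*$ then makes $\|B_\lambda f\|_p/\|f\|_p$ equal to the $v$-th term of the supremum defining $C$.

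Part (c) starts the same way for $\ell^\infty(V,\mu)$: the pointwise estimate $|(B_\lambda f)(v)\mu_v|\le\|f\|_\infty\,|\mu_v|\sum_{u\in\Chi(v)}|\lambda_u/\mu_u|$ together with its sharpness via a function with $|f(u)\mu_u|=1$ on $\Chi(v)$ and phases chosen so that every summand in $\sum_u\lambda_uf(u)$ is positive give the equivalence (i)$\Leftrightarrow$(iii) and the stated norm formula on $\ell^\infty(V,\mu)$. The delicate point is bringing (ii) into this chain, since $\ell^\infty$-boundedness does not automatically preserve $c_0(V,\mu)$. Assuming (iii), one already has $B_\lambda$ bounded on $\ell^\infty(V,\mu)$, and $B_\lambda e_v=\lambda_v e_{\prt(v)}\in c_0(V,\mu)$ (or $0$); since $\vect\{e_v:v\in V\}$ is dense in $c_0(V,\mu)$ and $c_0(V,\mu)$ is closed in $\ell^\infty(V,\mu)$, $B_\lambda$ stabilises $c_0(V,\mu)$. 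Conversely, boundedness on $c_0(V,\mu)$ forces (iii) by restricting the above extremizers to finite subsets $F\subseteq\Chi(v)$ (which automatically lie in $c_0(V,\mu)$), approximating $\sum_{u\in\Chi(v)}|\lambda_u/\mu_u|$ from below, and then taking the supremum over $v$; this simultaneously shows the two operator norms agree. The only steps I expect to require real care are tracking the complex phases in the H\"older extremizer of (b) and justifying the finite-support truncation argument in (c).
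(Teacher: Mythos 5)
Your proposal is correct in substance, and for the sufficiency half it is identical to the paper's argument: H\"older (or the triangle inequality) on each $\Chi(v)$, followed by the decomposition $V\setminus\{\rt\}=\bigsqcup_{v\in V}\Chi(v)$ to collapse the double sum. Where you genuinely diverge is in the necessity direction. The paper constructs no extremizers; instead, for each $v$ it considers the functional $\phi_v(f)=\mu_v(B_\lambda I f)(v)$ on $\ell^p(\Chi(v),\mu\chi_{\Chi(v)})$, observes that $|\phi_v(f)|=\|B_\lambda I f\|\le\|B_\lambda\|\,\|f\|$, and reads off $\bigl(\sum_{u\in\Chi(v)}|\mu_v\lambda_u/\mu_u|^{p^*}\bigr)^{1/p^*}\le\|B_\lambda\|$ from the dual space identification $\ell^p(\Chi(v),\mu\chi_{\Chi(v)})^*=\ell^{p^*}(\Chi(v),1/\overline{\mu}\,\chi_{\Chi(v)})$; the same device gives (ii)$\Rightarrow$(iii) in (c). Your explicit H\"older extremizers amount to proving that dual-norm identity by hand, so the two routes are essentially equivalent: the duality argument is shorter and automatically handles divergent child-sums, while yours is more elementary and makes the sharpness transparent. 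Your treatment of the $c_0$ statement in (c) (boundedness on $\ell^\infty$, $B_\lambda e_v=\lambda_v e_{\prt(v)}$, density of $\vect\{e_v:v\in V\}$ in $c_0(V,\mu)$ and closedness of $c_0(V,\mu)$ in $\ell^\infty(V,\mu)$) coincides with the paper's.

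One step in (b) needs the same repair you already invoke in (c): if $\sum_{u\in\Chi(v)}|\lambda_u/\mu_u|^{p^*}=\infty$ for some $v$, the vector defined by $f(u)\mu_u=\overline{\lambda_u/\mu_u}\,|\lambda_u/\mu_u|^{p^*-2}$ on $\Chi(v)$ does not belong to $\ell^p(V,\mu)$, so you cannot test with it directly, and this is precisely the case you must rule out. Restrict the extremizer to finite subsets $F\subset\Chi(v)$ (these are finitely supported, hence in the space), obtain $\|B_\lambda\|\ge|\mu_v|\bigl(\sum_{u\in F}|\lambda_u/\mu_u|^{p^*}\bigr)^{1/p^*}$, and let $F$ increase to $\Chi(v)$; this both excludes divergence and yields the lower bound in the norm formula. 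With that adjustment (in (a) the test vectors $e_v$ are already in the space, so nothing more is needed there), your proof is complete.
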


\begin{proof}
(b) (ii) $\Rightarrow$ (i). Let $M$ denote the supremum in (ii). Using H\"older's inequality we obtain that
\begin{align*}
\sum_{v\in V}\Big|\sum_{u\in \Chi(v)}|\lambda_uf(u)|\, \mu_v\Big|^p &\leq \sum_{v\in V}|\mu_v|^p\Big(\sum_{u\in \Chi(v)}\Big|\frac{\lambda_u}{\mu_u}\Big|^{p^*}\Big)^{p/p^*}\Big(\sum_{u\in \Chi(v)}|f(u)\mu_u|^{p}\Big)\\
&\leq M^{p/p^*}\sum_{v\in V}\sum_{u\in \Chi(v)}|f(u)\mu_u|^{p}\leq M^{p/p^*} \|f\|^p_{\ell^p(V,\mu)}.
\end{align*}
Thus, we get that $B_{\lambda}$ is well-defined and 
\begin{equation}\label{eq:3}
\|B_{\lambda}\| \leq M^{1/p^*}.
\end{equation}

(i) $\Rightarrow$ (ii). We consider $v\in V$. Let $I:\ell^p(\Chi(v),\mu \chi_{\Chi(v)})\to \ell^p(V,\mu)$ be the canonical isometric embedding. We define 
\[
\phi_v(f):=\mu_v(B_{\lambda}If)(v)=\sum_{u\in \Chi(v)}\mu_v\lambda_uf(u), \quad f\in \ell^p(\Chi(v),\mu \chi_{\Chi(v)}),
\]
which is a linear functional on $\ell^p(\Chi(v),\mu \chi_{\Chi(v)})$. Since
\[
|\phi_v(f)|=\|B_{\lambda}If\|_{\ell^p(V,\mu)}\leq \|B_{\lambda}\| \|If\|_{\ell^p(V,\mu)}= \|B_{\lambda}\| \|f\|_{\ell^p(\Chi(v),\mu \chi_{\Chi(v)})},
\]
we have that $\phi_v\in \ell^p(\Chi(v),\mu \chi_{\Chi(v)})^{\ast}=\ell^{p^*}\!(\Chi(v),1/\overline{\mu}\, \chi_{\Chi(v)})$ with $\| \phi_v\| \leq \|B_{\lambda}\|$. In other words,  
\[
\Big( \sum_{u\in\Chi(v)} \Big|\frac{\mu_v\lambda_u}{\overline{\mu}_u}\Big|^{p^*}\Big)^{1/p^*} \leq \|B_{\lambda}\|.
\]
This implies (ii), and together with \eqref{eq:3} we have also determined $\|B_{\lambda}\|$.

The proof of (a) is essentially the same, noting that $p^*=\infty$.

(c) (i) $\Rightarrow$ (ii). Let $B_\lambda$ be a bounded operator on $\ell^\infty(V,\mu)$. Note that $B_\lambda e_v\in c_0(V,\mu)$ for all $v\in V$; since $\vect\{e_v:v\in V\}$ is dense in $c_0(V,\mu)$ and $c_0(V,\mu)$ is a closed subspace of $\ell^\infty(V,\mu)$, (ii) follows.

(ii) $\Rightarrow$ (iii). Let $v\in V$. It follows as in (b), implication (i) $\Rightarrow$ (ii), that
\begin{equation}\label{eq:4}
\sup_{v\in V}\sum_{u\in\Chi(v)} \Big|\frac{\mu_v\lambda_u}{\mu_u}\Big| \leq \|B_{\lambda}\|_{c_0(V,\mu)\to c_0(V,\mu)},
\end{equation}
hence (iii).

(iii) $\Rightarrow$ (i). Let $M$ denote the supremum in (iii). Then we have for $f\in\ell^\infty(V,\mu)$
\begin{align*}
\sup_{v\in V}\sum_{u\in \Chi(v)}|\lambda_uf(u)|\, |\mu_v| &\leq \sup_{v\in V}\Big(|\mu_v|\sum_{u\in \Chi(v)}\Big|\frac{\lambda_u}{\mu_u}\Big|\Big)\sup_{u\in \Chi(v)}|f(u)\mu_u|\\
&\leq M \|f\|_{\ell^{\infty}(V,\mu)}.
\end{align*}
Thus we get that $B_{\lambda}$ is well-defined on $\ell^\infty(V,\mu)$ and 
\[
\|B_{\lambda}\|_{\ell^\infty(V,\mu)\to \ell^\infty(V,\mu)} \leq M,
\]
which implies (i). Since, in addition, 
\[
\|B_{\lambda}\|_{c_0(V,\mu)\to c_0(V,\mu)}\leq \|B_{\lambda}\|_{\ell^\infty(V,\mu)\to \ell^\infty(V,\mu)}
\]
we have together with \eqref{eq:4} that the norm of $B_\lambda$ on both spaces is $M$.
\end{proof}

As far as dynamical properties are concerned we will consider in this paper weighted shifts on unweighted spaces (as do Jab{\l}o\'nski et al.\ \cite{JJS12}) and unweighted shifts on weighted spaces (as does Martínez-Avenda\~{n}o \cite{Mar17}). One could perhaps argue that it is more natural to put weights on the operator, but the analysis is by far easier when the weights are put on the space because of the simplicity of the orbits of the unweighted shifts $B$ and $S$. Luckily, the two kinds of operators are canonically conjugate to one another and thus have the same dynamics. This is a well-known fact in the classical situation, see \cite{Shi74} and \cite[Section 4.1]{GrPe11}, and it is mentioned in \cite{Mar17} for arbitrary trees.

The conjugacies are in fact given by the following commutative diagrams. Let $\mu=(\mu_v)_{v\in V}$ and $\lambda=(\lambda_v)_{v\in V}$ be weights, and let $X$ denote one of the spaces $\ell^p(V)$, $1\leq p \leq\infty$, or $c_0(V)$. By $X_\mu$ we denote the space $\ell^p(V, \mu)$, $1\leq p \leq\infty$, or $c_0(V,\mu)$, respectively. Let $\phi_\mu(f)= (\mu_v f(v))_{v\in V}$. Then $\phi_\mu:X_\mu \to X$ is an isometric isomorphism.

Now, it is readily verified that
\[
\begin{CD}
X_\mu    @>S>>    X_\mu\\
@V{\phi_\mu}VV @VV{\phi_\mu}V \\
X    @>S_\lambda>> X
\end{CD}
\]
is a commutative diagram if the weights $\mu$ and $\lambda$ are linked by
\[
\lambda_v = \frac{\mu_v}{\mu_{\prt(v)}}, \, v\neq \rt;
\]
note that, in this diagram, $S$ maps $X_\mu$ into $X_\mu$ if and only if $S_\lambda$ maps $X$ into $X$.

In order to write $\mu$ in terms of $\lambda$ we fix a vertex $v_0\in V$; it is convenient to take $v_0=\rt$ if the tree is rooted. Then 
\[
\mu_v = \frac{\prod_{k=0}^{n-1}\lambda_{\prt^k(v)}}{\prod_{k=0}^{m-1}\lambda_{\prt^k(v_0)}}\mu_{v_0}, \, v\in V;
\]
here $m,n\geq 0$ are minimal numbers such that
\begin{equation}\label{eq-nm}
\prt^n(v) = \prt^m(v_0),
\end{equation}
and, as usual, empty products are 1. There is no harm to consider larger $m$ and $n$, but then there are extra terms in the quotient above that will cancel.

In the same way we have the commutative diagram
\begin{equation}\label{eq-conjB}
\begin{CD}
X_\mu    @>B>>    X_\mu\\
@V{\phi_\mu}VV @VV{\phi_\mu}V \\
X    @>B_\lambda>> X
\end{CD}
\end{equation}
if the weights $\mu$ and $\lambda$ are linked by
\[
\lambda_v = \frac{\mu_{\prt(v)}}{\mu_v}, \, v\neq \rt
\]
and
\begin{equation}\label{eq-conjBmu}
\mu_v = \frac{\prod_{k=0}^{m-1}\lambda_{\prt^k(v_0)}}{\prod_{k=0}^{n-1}\lambda_{\prt^k(v)}}\mu_{v_0}, \, v\in V
\end{equation}
with $m,n\geq 0$ minimal so that \eqref{eq-nm} holds.

Of course, more generally, one could also consider arbitrary weighted shifts on arbitrary weighted spaces; but they are likewise conjugate to an unweighted shift on a weighted space and therefore add nothing new. 

The conjugacies obtained above contain an interesting special case that is essentially known. Let $B_\lambda:X\to X$ and $B:X_\mu\to X_\mu$ be conjugate via \eqref{eq-conjBmu}, and let $c$ be a scalar with $|c|=1$. Then, again by \eqref{eq-conjBmu}, $cB_\lambda=B_{c\lambda}:X\to X$ is conjugate to $B:X_{\widetilde{\mu}}\to X_{\widetilde{\mu}}$, where the weight $\widetilde{\mu}$ satisfies $|\widetilde{\mu}_v/\widetilde{\mu}_{v_0}| = |\mu_v/\mu_{v_0}|$ for all $v\in V$. But this implies that $X_{\widetilde{\mu}}=X_\mu$. In other words, $c B_\lambda$ and $B_\lambda$ are similar operators on $X$ whenever $|c|=1$. As a consequence, in the complex scalar case, the spectrum and the point spectrum of $B_\lambda$ are rotation invariant. The same arguments work for any weighted shift on any weighted space. Moreover, one sees that the conjugacy is given by an isometric isomorphism. In the case of Hilbert spaces, such operators are called \textit{circular operators}. For simplicity, let us use this terminology also in the Banach space setting.

\begin{proposition}
Let $\lambda$ and $\mu$ be weights. Then any weighted shift operator $B_\lambda$ and $S_\lambda$ is a circular operator on any of the spaces $\ell^p(V,\mu)$, $1\leq p\leq \infty$, and $c_0(V,\mu)$.
\end{proposition}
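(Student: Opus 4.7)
The plan is to produce the isometric similarity directly via a diagonal (multiplication) operator. Fix $v_0\in V$, taking $v_0=\rt$ if the tree is rooted, and enumerate the generations as in Section~\ref{subs-dirtr}; for each $v\in V$ let $n(v)\in\ZZ$ be the unique integer with $v\in\gen_{n(v)}$, so that $n(u)=n(v)+1$ whenever $u\in\Chi(v)$. Given a scalar $c$ with $|c|=1$, I would set $\alpha_v=c^{-n(v)}$ in the case $T=B_\lambda$, and $\alpha_v=c^{n(v)}$ in the case $T=S_\lambda$. Since $|\alpha_v|=1$ for every $v\in V$, the associated multiplication operator $M_\alpha f=(\alpha_v f(v))_{v\in V}$ is a surjective linear isometry on each of the spaces $\ell^p(V,\mu)$, $1\leq p\leq\infty$, and $c_0(V,\mu)$, with inverse $M_\alpha^{-1}=M_{1/\alpha}$.

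It then remains to verify the intertwining identity $cT=M_\alpha T M_\alpha^{-1}$, for which a short direct computation suffices in each case. For $T=B_\lambda$, the relation $\alpha_u=c^{-1}\alpha_v$ for every $u\in\Chi(v)$ lets one pull the constant $c^{-1}$ out of the defining sum $(B_\lambda M_\alpha f)(v)=\sum_{u\in\Chi(v)}\lambda_u\alpha_u f(u)$, yielding $M_\alpha^{-1}B_\lambda M_\alpha=c^{-1}B_\lambda$, which is exactly the desired similarity. For $T=S_\lambda$, the analogous relation $\alpha_{\prt(v)}=c^{-1}\alpha_v$ applied to the single term $\lambda_v\alpha_{\prt(v)}f(\prt(v))$ produces the same conclusion. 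Boundedness of $T$ on the underlying space is automatically inherited by $cT=M_\alpha TM_\alpha^{-1}$, with the same operator norm.

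I do not expect a genuine obstacle. The only point deserving a moment of attention is that $n(v)$ may be a negative integer in the unrooted case, so the definition involves negative powers of $c$; this causes no trouble because $|c|=1$ makes $c^{-1}$ a perfectly good unit scalar. Note finally that the argument is indifferent to any relationship between the operator weight $\lambda$ and the space weight $\mu$, so it simultaneously covers arbitrary weighted shifts on arbitrary weighted spaces without any further work, as claimed.
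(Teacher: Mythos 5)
Your proof is correct. It differs in presentation, though not in substance, from the paper's argument: the paper deduces the proposition from the conjugacy machinery of \eqref{eq-conjB} and \eqref{eq-conjBmu}, observing that $cB_\lambda=B_{c\lambda}$ is conjugate to the unweighted shift $B$ on $X_{\widetilde{\mu}}$, where $|\widetilde{\mu}_v/\widetilde{\mu}_{v_0}|=|\mu_v/\mu_{v_0}|$, hence $X_{\widetilde{\mu}}=X_\mu$, so that $cB_\lambda$ and $B_\lambda$ are both conjugate to $B$ on the same space and therefore isometrically similar to each other (the general weighted-space case being covered by ``the same arguments''). Your route writes down this similarity explicitly: the composite of the two conjugacies is exactly your diagonal operator $M_\alpha$ with $\alpha_v=c^{\mp n(v)}$, where $n(v)$ is the generation of $v$. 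What your direct verification buys is a self-contained argument that never passes through the representation as an unweighted shift, treats $B_\lambda$ and $S_\lambda$, arbitrary space weights $\mu$, and all the spaces (including $\ell^\infty(V,\mu)$) uniformly, and exhibits the intertwining isometry concretely; the paper's derivation buys brevity, since the conjugacy diagrams are already in place. The one ingredient you use implicitly is that every vertex lies in exactly one generation, so that $n(v)$ is well defined and $n(u)=n(v)+1$ for $u\in\Chi(v)$; this is guaranteed by the enumeration of generations in Subsection \ref{subs-dirtr} (uniqueness following from the absence of cycles), and with it your computation $M_\alpha^{-1}B_\lambda M_\alpha=c^{-1}B_\lambda$, equivalently $cB_\lambda=M_\alpha B_\lambda M_\alpha^{-1}$, is complete, the case of $S_\lambda$ at the root being trivially consistent since both sides vanish there.
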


In the Hilbert space case, this result is due to Shields \cite[Section 2]{Shi74} for classical weighted shifts, and to Jab{\l}o\'nski et al. \cite[Section 3.3]{JJS12} for shifts on directed trees.

\subsection{Rolewicz operators and symmetric shift operators} 
In this paper we will consider two special classes of backward shift operators, the multiples of the backward shift $B$, also known as Rolewicz operators, and symmetric shift operators. 

\begin{example}\label{ex-rol}
Let $\lambda\in\KK$, $\lambda\neq 0$. Then the \textit{Rolewicz operator} $\lambda B=B_{(\lambda)_{v\in V}}$ is bounded on each space $\ell^1(V)$, while it is bounded on any of the spaces $\ell^p(V)$, $1<p\leq \infty$, or $c_0(V)$ if and only if $\sup_{v\in V} |\Chi(v)|<\infty$. These operators are named after S. Rolewicz who was the first to consider the dynamical properties of the operator $\lambda B$ for $V=\NN$, see \cite{Rol}. 
\end{example}

A weight $\lambda=(\lambda_v)_{v\in V}$ will be called \textit{symmetric} if 
\[
\lambda_u=\lambda_v
\]
whenever $u\sim v$. That is, a symmetric weight is a function of the generation. Let us fix $v_0\in V$, with $v_0=\rt$ in the rooted case, and let us enumerate the generations with respect to $v_0$, see Subsection \ref{subs-dirtr}. For $v\in \gen_n$ we will use the notation
\[
\lambda_n=\lambda_v.
\]
Notice that this makes sense due to the symmetry of the weight. Thus, from now on, we will be indexing a symmetric weight as $\lambda=(\lambda_n)_{n}$, where $n$ ranges over $\NN_0$ in the rooted case and over $\ZZ$ in the unrooted case. An interesting class of weighted backward shifts, generalizing the usual unilateral and bilateral ones, occurs when we consider a symmetric weight on a symmetric directed tree. 

\begin{example}\label{symmetric:boundedness}
Let $(V,E)$ be a symmetric directed tree and $\lambda=(\lambda_n)_{n}$ a symmetric weight on it. Then $B_{\lambda}$ is bounded on $\ell^1(V)$ if and only if 
\[
\sup_{n}|\lambda_n|<\infty,
\]
while it is bounded on any of the spaces $\ell^p(V)$, $1<p\leq \infty$, or $c_0(V)$, if and only if
\[
\sup_{n}\gamma_n|\lambda_{n+1}|^{p^*}<\infty,
\]
where $p^*=1$ when the underlying space is $\ell^{\infty}(V)$ or $c_0(V)$. We note that for $\ell^1(V)$ the symmetry of the directed tree is not needed. 
\end{example}

\section{Dynamics of weighted forward shifts}

Hypercyclicity of forward shifts has already been treated conclusively by Mar\-tínez-Aven\-da\~{n}o \cite[Section 3]{Mar17} (as said before, he considered the unweighted shift on weighted spaces). We give here the proofs for the sake of completeness.

If the directed tree $(V,E)$ has a root then $S_{\lambda}$ is not hypercyclic on $\ell^p(V)$, $1\leq p<\infty$, or on $c_0(V)$, see \cite[Proposition 3.3]{Mar17}. This is due to the fact that, for any $n\geq 1$,
\[
(S_{\lambda}^nf)(\rt)=0.
\]

\begin{proposition}[{\cite[Proposition 3.4]{Mar17}}]
If $V$ is unrooted and has a vertex of outdegree at least $2$ then $S_{\lambda}$ is not hypercyclic on $\ell^p(V)$, $1\leq p<\infty$, or on $c_0(V)$.
\end{proposition}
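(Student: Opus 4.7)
The plan is to produce, under the hypothesis, a closed proper subspace of the underlying space $X\in\{\ell^p(V),c_0(V)\}$ that contains every non-initial iterate of every orbit. This mirrors the rooted-case remark preceding the proposition, where the closed subspace was simply $\{f:f(\rt)=0\}$.

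First I would fix a vertex $w\in V$ with $|\Chi(w)|\ge 2$ and two distinct children $u_1,u_2\in \Chi(w)$. Because $V$ is unrooted, every vertex has a parent and so the iterates of $S_\lambda$ take the explicit form
\[
(S_\lambda^n f)(v) = \Big(\prod_{k=0}^{n-1}\lambda_{\prt^k(v)}\Big) f(\prt^n(v)),\quad n\ge 1,\ v\in V.
\]
Evaluating at $v=u_1$ and $v=u_2$ and using $\prt(u_1)=\prt(u_2)=w$, the two products agree after the first factor and $\prt^n(u_1)=\prt^n(u_2)$, so I expect the ``sibling identity''
\[
\frac{(S_\lambda^n f)(u_1)}{\lambda_{u_1}} = \frac{(S_\lambda^n f)(u_2)}{\lambda_{u_2}}, \quad n\ge 1,\ f\in X,
\]
to fall out immediately.

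Next I would set
\[
M := \Big\{g\in X : \frac{g(u_1)}{\lambda_{u_1}}=\frac{g(u_2)}{\lambda_{u_2}}\Big\}.
\]
Since point evaluations are continuous on $\ell^p(V)$ and on $c_0(V)$ (because $|g(v)|\le\|g\|$), $M$ is the kernel of a continuous linear functional, hence a closed subspace; it is proper, since $e_{u_1}\in X$ satisfies $e_{u_1}\notin M$. By the sibling identity, $S_\lambda^n x\in M$ for every $n\ge 1$ and every $x\in X$, and therefore
\[
\Orb(x,S_\lambda)\subseteq \{x\}\cup M.
\]
Since $\{x\}\cup M$ is closed with empty interior---a closed proper subspace has empty interior, and adjoining a single point cannot create any---the orbit is not dense in $X$, and no $x\in X$ is hypercyclic for $S_\lambda$.

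I do not foresee any serious obstacle: the entire argument reduces to the observation that for every $n\ge 1$ the iterate $S_\lambda^n f$ assigns proportional values at sibling vertices, with proportionality constant $\lambda_{u_1}/\lambda_{u_2}$ independent of $f$ and $n$. The only care needed is the elementary verification that $\{x\}\cup M$ is nowhere dense, which I would dispatch by noting that if some open ball were contained in $\{x\}\cup M$ then $M$ itself would have nonempty interior, forcing $M=X$.
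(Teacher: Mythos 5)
Your proof is correct and follows essentially the same route as the paper: the paper simply passes first to the conjugate unweighted shift $S$ on a weighted space, where your sibling identity becomes $(S^nf)(v_1)=(S^nf)(v_2)$ for all $n\ge 1$, and draws the same conclusion. Your direct computation with $S_\lambda$ (keeping the factors $\lambda_{u_1},\lambda_{u_2}$ in the relation) and your explicit closed-subspace/nowhere-dense argument are just a more detailed write-up of the identical idea.
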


\begin{proof}
We consider the corresponding unweighted shift $S$ on a weighted space. Let $v_1, v_2$ be two different children of a vertex $v$, and let $f$ be an arbitrary element of the underlying space. For each $n\geq 0$ we then have that
\[
(S^n f)(v_k)=f(\prt^n(v_k)),\, k=1,2.
\]
Since $\prt(v_1)=\prt(v_2)$, we see that 
\[
(S^nf)(v_1)=(S^nf)(v_2)
\]
for all $n\geq 1$, which implies that $f$ cannot be hypercyclic for $S$.
\end{proof}

From the above it follows that for $S_{\lambda}$ to be hypercyclic, $V$ must be $\ZZ$. The dynamics of weighted shifts on $\ZZ$ is rather well understood, see for instance \cite{BM09} and \cite{GrPe11}.

The remainder of this paper is devoted to the study of the dynamical properties of weighted backward shifts on directed trees.

\section{Dynamics of weighted backward shifts - the rooted case}\label{s-root}

Before we continue, let us state the obvious.

\begin{remark}\label{r-leaf}
If a directed tree $(V,E)$ has a leaf, then no weighted backward shift can be hypercyclic on any of the spaces considered here. This follows from the fact that, if $v$ is a leaf, then $(B_\lambda^nf)(v)=0$ for any element $f$ and any $n\geq 1$.
\end{remark}

\textit{Thus, throughout Sections \ref{s-root} and \ref{s-unrooted} we will assume that the directed tree has no leaves.}
\vspace{\baselineskip}

The dynamics of weighted backward shifts on $\NN$ is considerably easier to determine than that for shifts on $\ZZ$. It turns out that the same is true more generally for rooted trees as opposed to unrooted ones. 

When trying to characterize hypercyclicity of backward shifts, we were confronted with the need to control the lower bound of a weighted norm when the unweighted $\ell^1$-norm is fixed. This is exactly what the reverse H\"older inequality does.

\begin{lemma}\label{l-revhol}
Let $J$ be a finite or countable set. Let $\mu=(\mu_j)_j \in (\KK\setminus \{0\})^J$. Then
     \begin{align*}   
		\inf_{\|x\|_1=1} \sum_{j\in J} |x_j \mu_j| &=  \inf_{j\in J} |\mu_j|,\\
		\inf_{\|x\|_1=1} \Big(\sum_{j\in J} |x_j \mu_j|^p\Big)^{1/p} &=  \Big(\sum_{j\in J} \frac{1}{|\mu_j|^{p^*}}\Big)^{-1/p^*},\, 1<p<\infty,\\
  	\inf_{\|x\|_1=1} \sup_{j\in J} |x_j \mu_j| &=  \Big(\sum_{j\in J} \frac{1}{|\mu_j|}\Big)^{-1},
     \end{align*}
where $x\in \KK^J$, $\|x\|_1=\sum_{j\in J}|x_j|$ and $\infty^{-1}=0$.

The same holds when the sequences $x$ are required, in addition, to be of finite support.
\end{lemma}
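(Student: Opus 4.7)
The natural unified strategy is to recognize all three identities as instances of the reverse of H\"older's inequality. Writing $|x_j| = |x_j \mu_j|\cdot \tfrac{1}{|\mu_j|}$ and summing in $j$, H\"older's inequality (applied with exponents $p$ and $p^\ast$, including the edge cases $p=1,p^\ast=\infty$ and $p=\infty,p^\ast=1$) gives
\[
1 \;=\; \sum_{j\in J} |x_j| \;\le\; \Big(\sum_{j\in J} |x_j\mu_j|^p\Big)^{1/p} \Big(\sum_{j\in J} \tfrac{1}{|\mu_j|^{p^\ast}}\Big)^{1/p^\ast},
\]
with the usual sup/inf replacements when $p\in\{1,\infty\}$. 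Rearranging yields the claimed lower bound for $\inf_{\|x\|_1=1}\|x\mu\|_p$ in each of the three cases. This handles one direction in a single sweep.

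For the matching upper bound I would exhibit extremizers (or approximants). For $p=1$, take $x=e_{j_0}$ with $|\mu_{j_0}|$ arbitrarily close to $\inf_j|\mu_j|$; this is finitely supported and gives $\sum|x_j\mu_j|=|\mu_{j_0}|$. For $p=\infty$ and $\sum 1/|\mu_j|<\infty$, set $x_j = c/|\mu_j|$ with $c=(\sum 1/|\mu_j|)^{-1}$, which is admissible and realizes the equality case of H\"older; if $\sum 1/|\mu_j|=\infty$, truncate to an exhaustion by finite sets $F_n\uparrow J$, defining $x_j=c_n/|\mu_j|$ on $F_n$ with $c_n=(\sum_{j\in F_n} 1/|\mu_j|)^{-1}\to 0$. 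For $1<p<\infty$, equality in the H\"older step above forces $|x_j\mu_j|^p$ proportional to $1/|\mu_j|^{p^\ast}$; noting that $1+p^\ast/p=p^\ast$, this leads to the choice $|x_j| = c/|\mu_j|^{p^\ast}$ with $c = (\sum 1/|\mu_j|^{p^\ast})^{-1}$ when the series converges, and to a truncated version on finite $F_n$ otherwise. In each divergent case one checks that the corresponding norm tends to $0$, matching $\infty^{-1}=\infty^{-1/p^\ast}=0$.

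For the last assertion, observe that the extremizers above are either already finitely supported or are limits of finitely supported sequences (obtained by the truncation in the divergent regime, or, in the convergent regime, by restricting $|x_j|\propto 1/|\mu_j|^{p^\ast}$ to $F_n$ and renormalizing to have $\|x\|_1=1$, then letting $F_n\uparrow J$). Since the lower H\"older bound applies to all $x$ with $\|x\|_1=1$, whether finitely supported or not, both infima must coincide. The only mildly delicate point is the bookkeeping in the divergent cases; there I would keep the convention $\infty^{-1}=0$ explicit and verify that the truncated normalizing constants $c_n$ indeed tend to $0$, so that the approximations genuinely drive the norm to the claimed infimum.
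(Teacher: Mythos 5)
Your proposal is correct, and the extremizers you exhibit (namely $|x_j|\propto 1/|\mu_j|^{p^*}$, resp.\ $1/|\mu_j|$, suitably normalized or truncated) are exactly the ones the paper uses. The route differs in one genuine respect: for the case $1<p<\infty$ the paper invokes the reverse H\"older inequality with the negative exponent $q=\frac{1}{1-p}$ (citing Hardy--Littlewood--P\'olya), proves the identity for finite $J$, and then remarks that the countable case follows; you instead obtain the lower bound in all three cases at once from the ordinary H\"older inequality applied to the factorization $1=\sum_j |x_j|=\sum_j |x_j\mu_j|\cdot\frac{1}{|\mu_j|}$ — which is in fact the same trick the paper uses only for the third (sup) claim — and you treat infinite $J$, including the divergent regime where the infimum is $0$, directly by truncation along an exhaustion $F_n\uparrow J$. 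What your version buys is a unified, self-contained argument that needs no external reference and makes the $\infty^{-1}=0$ convention and the finite-support assertion completely explicit (the truncated, renormalized approximants are finitely supported, while the H\"older lower bound holds for all admissible $x$, so the two infima coincide); what the paper's version buys is brevity, since the reverse H\"older inequality delivers the finite-$J$ identity in one line. Your arithmetic checks out: with $|x_j|=c/|\mu_j|^{p^*}$ and $c=(\sum_j |\mu_j|^{-p^*})^{-1}$ one gets $\big(\sum_j |x_j\mu_j|^p\big)^{1/p}=c^{1/p^*}$, matching the claimed value, and $c_n\to 0$ in the divergent case.
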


\begin{proof}
The first claim is easily seen. 

Next, let $1<p<\infty$, and suppose that $J$ is a finite set. Take $q=\frac{1}{1-p}$, so that $\frac{1}{1/p}+\frac{1}{q}=1$. Then the reverse H\"older inequality, see \cite[Theorem 13]{HLP34}, tells us that
\[
\sum_{j\in J} |x_j \mu_j|^p \geq  \Big(\sum_{j\in J} |x_j|^{p/p}\Big)^p \Big(\sum_{j\in J} |\mu_j|^{pq}\Big)^{1/q}, 
\]
and hence
\[
\Big(\sum_{j\in J} |x_j \mu_j|^p\Big)^{1/p} \geq  \sum_{j\in J} |x_j|\Big(\sum_{j\in J} |\mu_j|^{-p^*}\Big)^{-1/p^*}, 
\]
where equality holds for $x=(|\mu_j|^{-p^*})_j$. Dividing by $\|x\|_1$ we get the second claim for finite sets $J$. The general case is easily deduced from this.

For the third claim: Since
\[
\sum_{j\in J} |x_j|=\sum_{j\in J} \Big(|x_j\mu_j| \frac{1}{|\mu_j|}\Big) \leq (\sup_{j\in J} |x_j \mu_j|) \sum_{j\in J} \frac{1}{|\mu_j|},
\]
it follows that the infimum is at least $\big(\sum_{j\in J} \frac{1}{|\mu_j|}\big)^{-1}$. If $J$ is finite, then the infimum is attained for
$x\in \KK^J$ given by
\[
x_j = \frac{1}{|\mu_j|} \Big(\sum_{k\in J} \frac{1}{|\mu_k|}\Big)^{-1},\quad j\in J.
\]
The general case is easily deduced from this.
\end{proof}

One may also arrive at the second claim for finite sets $J$ by the method of Lagrange multipliers: one needs to minimize $\sum_{j\in J} x_j^p |\mu_j|^p$ subject to $x_j >0$ for all $j\in J$ and $\sum_{j\in J} x_j =1$.

We can now characterize hypercyclic backward shifts on directed rooted trees, and we first put the weights on the space. As is well known, for the shifts on $\NN$, hypercyclicity and weak mixing coincide. The following theorem shows that this property is inherited by shifts on trees.

\begin{theorem}\label{t-charHCroot}
Let $(V,E)$ be a rooted directed tree and $\mu$ a weight. Let $X=\ell^p(V,\mu)$, $1\leq p<\infty$, or $X=c_0(V,\mu)$, and suppose that the backward shift $B$ is a bounded operator on $X$. 

{\rm (a)} The following assertions are equivalent:
\begin{enumerate}[{\rm (i)}]
    \item $B$ is hypercyclic;
    \item $B$ is weakly mixing;
    \item there is an increasing sequence $(n_k)_k$ of positive integers such that, for each $v\in V$, we have as $k\to\infty$,
		\begin{align*}
		\inf_{u\in \Chi^{n_k}(v)} |\mu_u| \to 0, &\text{ if $X=\ell^1(V,\mu)$;}\\
    \sum_{u\in \Chi^{n_k}(v)} \frac{1}{|\mu_u|^{p^*}} \to \infty, &\text{ if $X=\ell^p(V,\mu)$, $1<p<\infty$;}\\
    \sum_{u\in \Chi^{n_k}(v)} \frac{1}{|\mu_u|} \to \infty, &\text{ if $X=c_0(V,\mu)$.}
    \end{align*}
\end{enumerate}

{\rm (b)} $B$ is mixing if and only if condition {\rm (iii)} holds for the full sequence $(n_k)_k=(n)_n$.
\end{theorem}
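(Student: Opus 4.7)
The plan is to show (iii) $\Rightarrow$ (ii) $\Rightarrow$ (i) $\Rightarrow$ (iii), and handle (b) by the mixing version of the same reasoning. The implication (ii) $\Rightarrow$ (i) is immediate from the definitions.

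For (iii) $\Rightarrow$ (ii), I apply the Hypercyclicity Criterion (Theorem~\ref{t-hypcrit}) with $X_0 = Y_0 = \vect\{e_v : v \in V\}$. Since the tree is rooted, $B^n e_v = e_{\prt^n(v)}$ vanishes as soon as $n$ exceeds the depth of $v$, so condition~(i) of the criterion is automatic on $X_0$. The formula $(B^n f)(w) = \sum_{u\in \Chi^n(w)} f(u)$ shows that if $x$ is supported in $\Chi^{n_k}(v)$ then $B^{n_k} x = \bigl(\sum_{u \in \Chi^{n_k}(v)} x(u)\bigr) e_v$. Hence choosing $R_{n_k}(e_v)$ reduces to minimizing $\|x\|_X$ over finitely supported $x$ on $\Chi^{n_k}(v)$ subject to the constraint $\sum_u x(u) = 1$. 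Applying Lemma~\ref{l-revhol} with $J = \Chi^{n_k}(v)$ gives the infima $\inf_u |\mu_u|$, $\bigl(\sum_u |\mu_u|^{-p^*}\bigr)^{-1/p^*}$, and $\bigl(\sum_u |\mu_u|^{-1}\bigr)^{-1}$ in the three cases, respectively; condition~(iii) is precisely the statement that these infima tend to $0$. Extending $R_{n_k}$ linearly to $Y_0$ completes the verification.

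For (i) $\Rightarrow$ (iii), I use topological transitivity combined with a diagonal argument. Fix an enumeration $V = \{v_1, v_2, \ldots\}$ and build $(n_k)$ inductively. Given $n_{k-1}$, choose $\eps_k > 0$ with $\eps_k < \tfrac{1}{2}\min_{j\leq k} |\mu_{v_j}|$ and apply transitivity to $U_k = B(0, \eps_k)$ and $W_k = \{f : \|f - \sum_{j \leq k} e_{v_j}\|_X < \eps_k\}$; since a hypercyclic vector in $U_k$ visits $W_k$ infinitely often, one may select $n_k > n_{k-1}$ and produce $x \in U_k$ with $B^{n_k} x \in W_k$. Evaluating at $v_j$ yields $\bigl|\sum_{u \in \Chi^{n_k}(v_j)} x(u)\bigr| > \tfrac{1}{2}$ for every $j \leq k$. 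Because the sets $\Chi^{n_k}(v_j)$ are pairwise disjoint (each $u$ has a unique $n_k$-th parent, so it lies in at most one $\Chi^{n_k}(v_j)$), a H\"older-type estimate restricted to $\Chi^{n_k}(v_j)$ forces $\inf_{u \in \Chi^{n_k}(v_j)} |\mu_u| \leq 2 \eps_k$ in the $\ell^1$ case and the analogous bounds $\sum_u |\mu_u|^{-p^*} > (2\eps_k)^{-p^*}$ and $\sum_u |\mu_u|^{-1} > (2\eps_k)^{-1}$ in the other two cases, uniformly in $j \leq k$. Letting $\eps_k \to 0$ yields the desired common sequence.

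Part (b) follows by the same scheme applied to the full sequence: (iii) for $(n_k)_k = (n)_n$ drives the mixing version of Theorem~\ref{t-hypcrit}, while mixing guarantees that the transitivity step of the previous paragraph runs for every sufficiently large $n$, giving the full-sequence form of (iii). The main obstacle is the (i) $\Rightarrow$ (iii) direction, where a single $n_k$ must simultaneously control \emph{all} of $v_1, \ldots, v_k$; the device that resolves this is to test transitivity against the combined target $\sum_{j \leq k} e_{v_j}$ and exploit the disjointness of the $\Chi^{n_k}(v_j)$ to decouple their contributions to $\|x\|_X$.
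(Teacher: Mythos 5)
Your proposal is correct and follows essentially the same route as the paper: the Hypercyclicity Criterion with $R_{n_k}e_v$ built from the minimizers of Lemma~\ref{l-revhol} on $\Chi^{n_k}(v)$ (using that $B^{n_k}$ annihilates $X_0$ since the tree is rooted), and for (i)~$\Rightarrow$~(iii) a transitivity argument against targets $\sum_{j\le k}e_{v_j}$ combined with the reverse-H\"older bounds and a diagonal choice of $(n_k)$, with the mixing case handled by the full-sequence versions of both steps. Your bookkeeping (enumerating $V$ and shrinking $\eps_k$) matches the paper's exhaustion by finite sets $F_k$, so there is no essential difference.
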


\begin{proof}
(a) It suffices to show that (iii) $\Rightarrow$ (ii) and (i) $\Rightarrow$ (iii). The strategy of the proof is the classical one, the new element being Lemma \ref{l-revhol} (for both implications). 

We do the proof for $X=\ell^p(V,\mu)$, $1<p<\infty$; the other two cases are similar.

(iii) $\Rightarrow$ (ii). We apply the Hypercyclicity Criterion. Let $X_0=Y_0=\vect\{e_v: v\in V\}$, which is dense in $\ell^p(V,\mu)$. 

Let $v\in V$. By condition (iii) and Lemma \ref{l-revhol}, applied to $J=\Chi^{n_k}(v)$, there are $g_{v,n_k}\in \KK^V$, $k\geq 1$,
of support in $\Chi^{n_k}(v)$ such that
\begin{equation}\label{eq-one}
\sum_{u\in \Chi^{n_k}(v)}|g_{v,n_k}(u)|=1
\end{equation}
and
\[
\sum_{u\in \Chi^{n_k}(v)}|g_{v,n_k}(u)\mu_u|^p\to 0
\]
as $k\to\infty$. Then $g_{v,n_k}\in \ell^p(V,\mu)$ and $g_{v,n_k}\to 0$ in $\ell^p(V,\mu)$ as $k\to\infty$. We can suppose that $g_{v,n_k}$ is non-negative. (For $X=c_0(V,\mu)$ we choose $g_{v,n_k}$ to be in addition of finite support so that $g_{v,n_k}\in c_0(V,\mu)$; see the final assertion in Lemma \ref{l-revhol}.)

Now  define maps $R_{n_k}:Y_0\rightarrow \ell^p(V,\mu)$, $k\geq 1$, by setting
\[
R_{n_k}e_v=g_{v,n_k}, v\in V
\]
and extending linearly to $Y_0$. Then, for all $g\in Y_0$, 
\[
R_{n_k}g \to 0
\]
as $k\to\infty$. In addition, since $g_{v,n_k}$ is non-negative and has support in $\Chi^{n_k}(v)$, \eqref{eq-one} implies that
\[
B^{n_k}R_{n_k}e_v=B^{n_k} g_{v,n_k} =e_v, v\in V,
\]
so that $B^{n_k}R_{n_k}$ is the identity on $Y_0$; note that the second equation is because $B$ is the unweighted shift. 

Finally, since $V$ is rooted, for any $f\in X_0$ there is some $k$ large enough so that
\[
B^{n_k}f=0.
\]
Thus $B$ satisfies the Hypercyclicity Criterion and is therefore weakly mixing.

(i) $\Rightarrow$ (iii). Let $F\subset V$ be finite, $N\geq 1$ and $\eps >0$. Choose $\delta>0$ such that, for any $g=(g(v))_{v\in V}\in \ell^p(V, \mu)$, $\|g-\sum_{v\in F}e_v\|<\delta$ implies that $|g(v)|\geq \tfrac{1}{2}$ for all $v\in F$.

Now, by topological transitivity of $B$ there is some $f\in \ell^p(V, \mu)$ and some $n\geq N$ such that
\[
\|f\|< {\eps}\quad\text{and}\text \quad \Big\|B^nf-\sum_{v\in F}e_v\Big\|<\delta,
\]
which implies that, for all $v\in F$,
\[
c_v:=\sum_{u\in \Chi^{n}(v)}|f(u)|\geq \Big|\sum_{u\in \Chi^{n}(v)}f(u)\Big|=|(B^nf)(v)|\geq \frac{1}{2}.
\]
Choosing, in Lemma \ref{l-revhol}, $\Chi^{n}(v)$ for $J$ and the restriction of $\frac{1}{c_v}|f|$ to $\Chi^{n}(v)$ as $x$ we obtain that, for any $v\in F$,
\[
\Big(\sum_{u\in \Chi^{n}(v)} \frac{1}{|\mu_u|^{p^*}}\Big)^{-1/p^*} \leq \frac{1}{c_v}\Big(\sum_{u\in \Chi^{n}(v)} |f(u) \mu_u|^p\Big)^{1/p}\leq \frac{1}{c_v}\|f\|<2\eps.
\]

Applying this argument now to an increasing sequence $(F_k)_k$ of finite subsets of $V$ with $\bigcup_{k\geq 1}F_k =V$, to an increasing sequence $(N_k)_k$ of positive integers, and to a positive sequence $(\eps_k)_k$ tending to 0, condition (iii) follows.

(b) The proof is identical to that of (a). For one implication one has to note that if the Hypercyclicity Criterion holds for the full sequence then the operator is mixing. For the other implication one replaces topological transitivity by the definition of the mixing property.
\end{proof}

For what it's worth let us mention that the characterizing conditions in (iii) can be written in a unified way. If we define the conjugate exponent as $p^*=1$ for $X=c_0(V,\mu)$, then the three conditions are in fact
\[
\Big\| \Big(\frac{1}{\mu_u}\Big)_{u\in\Chi^{n_k}(v)}\Big\|_{p^*}\to \infty,
\]
where $\|\cdot\|_{p^*}$ denotes the usual $p^*$-norm.

We can now pass to weighted shifts on unweighted spaces via the conjugacy \eqref{eq-conjB} with \eqref{eq-conjBmu}. In order to simplify notation, let us write for a weight $\lambda=(\lambda_v)_{v\in V}$
\[
\lambda(v\to u) := \prod_{k=0}^{n-1} \lambda_{\prt^k(u)} = \lambda_{\prt^{n-1}(u)}\cdots\lambda_{\prt^2(u)}\lambda_{\prt(u)}\lambda_{u}
\]
whenever $u\in \Chi^n(v)$: it is simply the product of the $\lambda_w$ for all the vertices $w$ along the (unique) branch leading from $v$ to $u$, see Figure \ref{f-branch}. Note that we have the identity
\begin{equation}\label{eq-lambda}
\lambda(v\to u) = \lambda(v\to w) \lambda(w\to u)
\end{equation}
for any vertex $w$ on the branch from $v$ to $u$.

\begin{figure}[h!]%
\begin{tikzpicture}[scale=1]
\draw[->,>=latex] (0,0) - - (1,.8);
\draw[->,>=latex] (1,.8) - - (2,1.3);
\draw[->,>=latex] (2,1.3) - - (3,1.7);
\draw[->,>=latex] (3,1.7) - - (4,2);
\draw[->,>=latex] (4,2) - - (5,2.2);
\draw[->,>=latex] (5,2.2) - - (6,2.3);
\draw[fill] (0,0) circle (1pt);
\draw (.8,0.1) node[align=center, below]{{$v=\prt^n(u)$}};
\draw (-.3,.5) node{{\scriptsize $\lambda_{\prt^{n-1}(u)}$}};
\draw[fill] (1,.8) circle (1pt);
\draw (1.6,0.9) node[align=center, below]{{\scriptsize $\prt^{n-1}(u)$}};
\draw (.8,1.25) node{{\scriptsize $\lambda_{\prt^{n-2}(u)}$}};
\draw[fill] (2,1.3) circle (1pt);
\draw[fill] (3,1.7) circle (1pt); 
\draw (3.35,1.8) node[align=center, below]{{\scriptsize $\prt^3(u)$}};
\draw (3.0,2.05) node{{\scriptsize $\lambda_{\prt^{2}(u)}$}};
\draw[fill] (4,2) circle (1pt); 
\draw (4.3,2.1) node[align=center, below]{{\scriptsize $\prt^2(u)$}};
\draw (4.1,2.3) node{{\scriptsize $\lambda_{\prt(u)}$}};
\draw[fill] (5,2.2) circle (1pt); 
\draw (5.25,2.3) node[align=center, below]{{\scriptsize $\prt(u)$}};
\draw (5.4,2.45) node{{\scriptsize $\lambda_u$}};
\draw[fill] (6,2.3) circle (1pt);
\draw (6.25,2.3) node{{$u$}};
\end{tikzpicture}
\caption{A branch of a directed tree with associated weights}%
\label{f-branch}%
\end{figure}
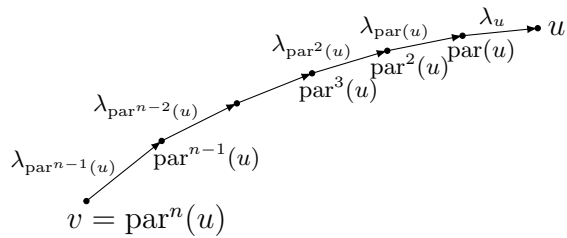

\begin{theorem}\label{t-charHCrootalt}
Let $(V,E)$ be a rooted directed tree and $\lambda$ a weight. Let $X=\ell^p(V)$, $1\leq p<\infty$, or $X=c_0(V)$, and suppose that the weighted backward shift $B_\lambda$ is a bounded operator on $X$. 

{\rm (a)} The following assertions are equivalent:
\begin{enumerate}[{\rm (i)}]
    \item $B_{\lambda}$ is hypercyclic;
    \item $B_{\lambda}$ is weakly mixing;
    \item there is an increasing sequence $(n_k)_k$ of positive integers such that, for each $v\in V$, we have as $k\to\infty$,
		\begin{align*}
		\sup_{u\in \Chi^{n_k}(v)} |\lambda(v\to u)| \to\infty, &\text{ if $X=\ell^1(V)$;}\\
    \sum_{u\in \Chi^{n_k}(v)} |\lambda(v\to u)|^{p^*} \to \infty, &\text{ if $X=\ell^p(V)$, $1<p<\infty$;}\\
    \sum_{u\in \Chi^{n_k}(v)} |\lambda(v\to u)| \to \infty, &\text{ if $X=c_0(V)$.}
    \end{align*}
\end{enumerate}

{\rm (b)} $B_\lambda$ is mixing if and only if condition {\rm (iii)} holds for the full sequence $(n_k)_k=(n)_n$.
\end{theorem}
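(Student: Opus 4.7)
The plan is to deduce Theorem \ref{t-charHCrootalt} from Theorem \ref{t-charHCroot} via the conjugacy recorded in diagram \eqref{eq-conjB}. First I would invoke that diagram with $v_0 = \rt$: then the minimal choice of exponents in \eqref{eq-nm} is $m=0$, so formula \eqref{eq-conjBmu} simplifies to
\[
\mu_v \;=\; \frac{\mu_{\rt}}{\prod_{k=0}^{n-1} \lambda_{\prt^k(v)}} \;=\; \frac{\mu_{\rt}}{\lambda(\rt\to v)}, \qquad v\in V,
\]
where $\mu_{\rt}$ may be chosen as any nonzero scalar (say $\mu_{\rt}=1$). With this choice the map $\phi_\mu : X_\mu \to X$ is an isometric isomorphism intertwining $B$ on $X_\mu$ with $B_\lambda$ on $X$, so the boundedness hypothesis on $B_\lambda$ transfers to $B$ on $X_\mu$, and hypercyclicity, weak mixing, and the mixing property are preserved.

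Next I would apply Theorem \ref{t-charHCroot} to $B$ acting on $X_\mu$ and translate its characterizing conditions into the language of $\lambda$. For this I would use the multiplicativity identity \eqref{eq-lambda}: for any $v\in V$ and $u\in \Chi^{n_k}(v)$,
\[
\lambda(\rt\to u) \;=\; \lambda(\rt\to v)\,\lambda(v\to u),
\]
so that
\[
\frac{1}{|\mu_u|} \;=\; c_v\,|\lambda(v\to u)|, \qquad c_v := \frac{|\lambda(\rt\to v)|}{|\mu_{\rt}|}.
\]
The key observation is that $c_v$ is a positive constant depending only on $v$; since in each of the three convergence conditions of Theorem \ref{t-charHCroot}(iii) we are asking for a limit to be either $0$ or $\infty$ for each fixed $v$, the factor $c_v$ (and the exponent $p^\ast$ applied to it in the middle case) is irrelevant.

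Translating case by case, the condition $\inf_{u\in\Chi^{n_k}(v)}|\mu_u|\to 0$ becomes $\sup_{u\in\Chi^{n_k}(v)}|\lambda(v\to u)|\to \infty$ (this is the $\ell^1$ case), while the conditions $\sum_{u\in\Chi^{n_k}(v)} |\mu_u|^{-p^\ast}\to \infty$ and $\sum_{u\in\Chi^{n_k}(v)} |\mu_u|^{-1}\to \infty$ become respectively $\sum_{u\in\Chi^{n_k}(v)} |\lambda(v\to u)|^{p^\ast}\to\infty$ and $\sum_{u\in\Chi^{n_k}(v)}|\lambda(v\to u)|\to \infty$. This gives (a), and exactly the same argument applied with the full sequence $(n)_n$ yields (b). There is no serious obstacle in this strategy; the only point requiring care is to recognise that the constants $c_v$ are harmless for the asymptotic conditions, so the whole proof is a direct transport along the conjugacy.
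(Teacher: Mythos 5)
Your proposal is correct and is exactly the route the paper intends: Theorem \ref{t-charHCrootalt} is obtained from Theorem \ref{t-charHCroot} by transporting along the conjugacy \eqref{eq-conjB} with \eqref{eq-conjBmu} (taking $v_0=\rt$, so $\mu_v=\mu_{\rt}/\lambda(\rt\to v)$) and using \eqref{eq-lambda} to absorb the harmless constants $c_v$. Nothing further is needed.
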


As our first application we consider the Rolewicz operators, see Example \ref{ex-rol}. Here the geometry of the tree plays a crucial role, unless the underlying space is $\ell^1(V)$. 

\begin{corollary}\label{c-rol}
Let $(V,E)$ be a rooted directed tree. Let $\lambda\in\KK$ be a non-zero number and $\lambda B$ the corresponding Rolewicz operator.

{\rm (a)} The operator $\lambda B$ is a bounded hypercyclic operator on $\ell^1(V)$ if and only if $|\lambda|>1$. In that case it is mixing.

{\rm (b)} Let $X=\ell^p(V)$, $1< p<\infty$, or $X=c_0(V)$, and let $\lambda B$ be bounded on $X$. Then $\lambda B$ is hypercyclic on $X$ if and only if there is an increasing sequence $(n_k)_k$ of positive integers such that, for each $v\in V$,
\[
|\Chi^{n_k}(v)|\,|\lambda|^{n_kp^*} \to\infty
\]
as $k\to\infty$. And $\lambda B$ is mixing on $X$ if and only if, for each $v\in V$,
\[
|\Chi^{n}(v)|\,|\lambda|^{np^*}\to\infty
\]
as $n\to\infty$. Here, $p^*=1$ for $X=c_0(V)$.
\end{corollary}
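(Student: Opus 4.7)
The plan is to derive both parts as immediate specializations of Theorem \ref{t-charHCrootalt} applied to the constant weight $\lambda_v = \lambda$, $v\in V$. Under the standing assumption that $V$ has no leaves, $\Chi^n(v)$ is non-empty for every $v\in V$ and $n\geq 0$, and
\[
\lambda(v\to u) = \prod_{k=0}^{n-1}\lambda_{\prt^k(u)} = \lambda^n
\]
whenever $u\in\Chi^n(v)$, which makes the three characterizing conditions in Theorem \ref{t-charHCrootalt}(iii) take a very explicit form.

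For part (a), boundedness of $\lambda B$ on $\ell^1(V)$ is automatic by Example \ref{ex-rol}, so we only need to apply the $\ell^1$-characterization. The condition becomes
\[
\sup_{u\in\Chi^{n_k}(v)}|\lambda|^{n_k}\to\infty,\quad v\in V,
\]
that is, $|\lambda|^{n_k}\to\infty$, since $\Chi^{n_k}(v)$ is non-empty. This holds for some increasing sequence if and only if $|\lambda|>1$, and in that case it holds for the full sequence $(n)_n$, which by Theorem \ref{t-charHCrootalt}(b) yields the mixing property.

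For part (b), we assume that $\lambda B$ is bounded on $X=\ell^p(V)$, $1<p<\infty$, or $X=c_0(V)$; by Example \ref{ex-rol} this already forces $\sup_{v}|\Chi(v)|<\infty$. Applying the $\ell^p$- (resp.\ $c_0$-) case of Theorem \ref{t-charHCrootalt}(iii) with $\lambda(v\to u)=\lambda^{n_k}$ constant on $\Chi^{n_k}(v)$, the sum collapses to $|\Chi^{n_k}(v)|\,|\lambda|^{n_k p^*}$, with the convention $p^*=1$ for $c_0(V)$. Hence hypercyclicity is equivalent to the existence of an increasing sequence $(n_k)_k$ with $|\Chi^{n_k}(v)|\,|\lambda|^{n_kp^*}\to\infty$ for each $v$, and by Theorem \ref{t-charHCrootalt}(b), the mixing property corresponds to the same condition along the full sequence.

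There is essentially no serious obstacle; the main points to verify carefully are that the leaf-free assumption indeed makes $\sup_{u\in\Chi^{n_k}(v)}|\lambda|^{n_k}$ reduce to $|\lambda|^{n_k}$ in the $\ell^1$-case, and that the unified exponent convention $p^*=1$ for $c_0(V)$ matches the formula $|\Chi^{n_k}(v)|\,|\lambda|^{n_k p^*}$ stated in the corollary.
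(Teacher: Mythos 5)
Your proposal is correct and is exactly the argument the paper intends: the corollary is presented as an immediate specialization of Theorem \ref{t-charHCrootalt} to the constant weight $\lambda_v=\lambda$ (with boundedness on $\ell^1(V)$ supplied by Example \ref{ex-rol}), and your computation $\lambda(v\to u)=\lambda^{n}$ on the leafless tree, collapsing the three conditions to $|\lambda|^{n_k}\to\infty$ resp.\ $|\Chi^{n_k}(v)|\,|\lambda|^{n_kp^*}\to\infty$, is precisely that specialization.
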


As a particular case we note that if the directed tree $(V,E)$ is the rooted tree in which each vertex has exactly $N\geq 1$ children, then the Rolewicz operator $\lambda B$ is hypercyclic on $X$ (and then mixing) if and only if 
\begin{align*}
|\lambda|>1 & \text{ if $X=\ell^1(V)$};\\
|\lambda|>N^{-1/p^*} & \text{ if $X=\ell^p(V)$, $1<p<\infty$};\\
|\lambda|>N^{-1} & \text{ if $X=c_0(V)$}.
\end{align*}

As another special case we have the following.

\begin{example}
Interestingly, there are rooted directed trees $(V,E)$ that support a hypercyclic non-mixing Rolewicz operator on $\ell^p(V)$, $1<p<\infty$, or $c_0(V)$. One need only choose a scalar $\lambda$ with $0<|\lambda|<1$, fix an integer $N>|\lambda|^{-p^*}$, and then construct a symmetric tree in which sufficiently long stretches of generations with exactly one child alternate with sufficiently long stretches of generations with exactly $N$ children. 
\end{example}

In the classical case of the rooted tree $V=\mathbb{N}$ it suffices to demand the conditions in (iii) of Theorem \ref{t-charHCrootalt} only for the root, see \cite[Theorem 4.8]{GrPe11}. This is, of course, not the case for general trees. Just consider the directed tree in which the root has exactly two children and any other vertex has exactly one child; then define $\lambda_v=1$ along one branch and $\lambda_v=2$ along the other. However, when we impose symmetry on both the directed tree and the weight, then the conditions characterizing hypercyclicity and the mixing property simplify just as in classical case. 

\begin{corollary}
Let $(V,E)$ be a symmetric, rooted directed tree and $\lambda=(\lambda_n)_{n\in \NN_0}$ a symmetric weight. Let $X=\ell^p(V)$, $1\leq p<\infty$, or $X=c_0(V)$, and suppose that $B_{\lambda}$ is a bounded operator on $X$.

{\rm (a)} The operator $B_{\lambda}$ is hypercyclic (resp. mixing) on $\ell^1(V)$ if and only if
\[
\sup_{n\geq 1}|\lambda_1 \cdots \lambda_n|=\infty \quad (resp. \lim_{n\rightarrow \infty}|\lambda_1 \cdots \lambda_n|=\infty).
\]

{\rm (b)} The operator $B_{\lambda}$ is hypercyclic (resp. mixing) on $\ell^p(V)$, $1<p<\infty$, or on $c_0(V)$ if and only if
\[
\sup_{n\geq 1}\gamma_0 \cdots \gamma_{n-1}|\lambda_1 \cdots \lambda_n|^{p^*}=\infty \quad (resp. \lim_{n\rightarrow \infty}\gamma_0 \cdots \gamma_{n-1}|\lambda_1 \cdots \lambda_n|^{p^*}=\infty),
\]
where $p^*=1$ for $X=c_0(V)$.
\end{corollary}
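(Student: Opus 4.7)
The plan is to specialize Theorem \ref{t-charHCrootalt} to the symmetric setting. Fix $m\geq 0$ and a vertex $v\in\gen_m$. By symmetry of the tree, $|\Chi^n(v)|=\gamma_m\gamma_{m+1}\cdots\gamma_{m+n-1}$, and every $u\in\Chi^n(v)$ lies in $\gen_{m+n}$ with the intermediate vertices on the branch from $v$ to $u$ occupying the generations $m+1,\ldots,m+n$; by symmetry of the weight,
\[
\lambda(v\to u)=\lambda_{m+1}\lambda_{m+2}\cdots\lambda_{m+n},
\]
a value independent of the choice of $u\in\Chi^n(v)$. Writing $P_n:=\gamma_0\cdots\gamma_{n-1}|\lambda_1\cdots\lambda_n|^{p^*}$ (with $p^*=1$ when $X=c_0(V)$) and $L_n:=|\lambda_1\cdots\lambda_n|$, both with the convention $P_0=L_0=1$, the telescoping identities
\[
\gamma_m\cdots\gamma_{m+n-1}|\lambda_{m+1}\cdots\lambda_{m+n}|^{p^*}=\frac{P_{m+n}}{P_m},\qquad |\lambda_{m+1}\cdots\lambda_{m+n}|=\frac{L_{m+n}}{L_m}
\]
show that condition (iii) of Theorem \ref{t-charHCrootalt} at $v\in\gen_m$ reads $P_{m+n_k}\to\infty$ in case (b) and $L_{m+n_k}\to\infty$ in case (a), since $P_m$ and $L_m$ are fixed nonzero constants.

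For the mixing statement, Theorem \ref{t-charHCrootalt}(b) requires the condition to hold for the full sequence at every vertex. Since $\{m+n:n\geq 1\}$ is a tail of $\NN$, requiring $P_{m+n}\to\infty$ (respectively $L_{m+n}\to\infty$) as $n\to\infty$ for every $m\geq 0$ is equivalent to $\lim_{n\to\infty}P_n=\infty$ (respectively $\lim_{n\to\infty}L_n=\infty$), as stated. For the hypercyclicity statement, Theorem \ref{t-charHCrootalt}(a) requires a \emph{single} increasing sequence $(n_k)$ that witnesses condition (iii) at every vertex simultaneously. Taking $m=0$ immediately forces $\sup_n P_n=\infty$ (resp.\ $\sup_n L_n=\infty$), which is the easy half of the equivalence.

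The main work, and the anticipated obstacle, is the converse: from the $m=0$ condition alone, to produce a single sequence $(n_k)$ along which $P_{m+n_k}\to\infty$ (resp.\ $L_{m+n_k}\to\infty$) for every $m\geq 0$. This is where we exploit the boundedness of $B_\lambda$ via Example \ref{symmetric:boundedness}: it yields $C:=\sup_n\gamma_n|\lambda_{n+1}|^{p^*}<\infty$ in case (b) and $C:=\sup_n|\lambda_n|<\infty$ in case (a). Hence $P_{n+1}/P_n\leq C$ (resp.\ $L_{n+1}/L_n\leq C$), and iterating gives $P_{N-j}\geq P_N/C^{j}$ (resp.\ $L_{N-j}\geq L_N/C^j$) for $0\leq j\leq N$. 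Assuming $\sup_n P_n=\infty$, for each $k\geq 1$ pick $N_k$ with $P_{N_k}\geq k\,C^k$ and set $n_k:=N_k-k$, passing to a subsequence to ensure $(n_k)$ is strictly increasing. Then for any fixed $m\geq 0$ and all $k\geq m$,
\[
P_{n_k+m}\geq \frac{P_{N_k}}{C^{k-m}}\geq k\,C^m,
\]
so $P_{n_k+m}\to\infty$ as $k\to\infty$, proving case (b). The identical construction, with $L_n$ replacing $P_n$, handles case (a).
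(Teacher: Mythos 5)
Your proof is correct and follows essentially the same route as the paper: you specialize Theorem \ref{t-charHCrootalt} using the symmetry of the tree and the weight, treat the root for the easy direction, and use boundedness (your one-step ratio bound $P_{n+1}\le C P_n$, which is the same information as the estimate from Proposition \ref{p-Bbounded}) to propagate divergence at the root to every generation, merely replacing the paper's appeal to a variant of \cite[Lemma 4.2]{GrPe11} by an explicit construction of the sequence $(n_k)$. The only point to make explicit is that extracting a strictly increasing subsequence of $n_k=N_k-k$ requires $n_k\to\infty$, which indeed holds because $P_N\le C^N$ together with $\sup_N P_N=\infty$ forces $C>1$ and hence $C^{N_k-k}\ge k$.
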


\begin{proof}
We will do the proof for hypercyclicity on $\ell^p(V)$, $1<p<\infty$, since the remaining cases are similar and even simpler. Let us observe that, due to the symmetry of $(V,E)$, if $v\in \gen_m$, $m\geq 0$, then
\[
|\Chi^n(v)|=\gamma_m \gamma_{m+1}\cdots \gamma_{m+n-1}, \quad n\geq 1.
\]

Now, one implication of the claim is obvious since if $B_{\lambda}$ is hypercyclic, then by Theorem \ref{t-charHCrootalt} there is an increasing sequence $(n_k)_k$ such that
\[
|\Chi^{n_k}(\rt)|\,|\lambda_1\cdots \lambda_{n_k}|^{p^*}=\gamma_0 \cdots \gamma_{n_k-1}|\lambda_1 \cdots \lambda_{n_k}|^{p^*}\rightarrow \infty.
\]

For the reverse implication, let $(n_k)_k$ be an increasing sequence such that 
\[
\gamma_0 \cdots \gamma_{n_k-1}|\lambda_1 \cdots \lambda_{n_k}|^{p^*}\rightarrow \infty.
\]
We fix $j\in \NN$. We then have for $k$ sufficiently large that
\[
\gamma_0 \cdots \gamma_{n_k-j-1}|\lambda_1 \cdots \lambda_{n_k-j}|^{p^*}=\frac{\gamma_0 \cdots \gamma_{n_k-1}|\lambda_1 \cdots \lambda_{n_k}|^{p^*}}{\gamma_{n_k-j} \cdots \gamma_{n_k-1}|\lambda_{n_k-j+1} \cdots \lambda_{n_k}|^{p^*}}.
\]
By Proposition \ref{p-Bbounded}, the boundedness of $B_{\lambda}$ yields for these $k$ that
\[
\gamma_{n_k-j} \cdots \gamma_{n_k-1}|\lambda_{n_k-j+1} \cdots \lambda_{n_k}|^{p^*}\leq \|B_{\lambda}\|^{jp^*}.
\]
We therefore deduce that, for each $j\in \NN$,
\[
\gamma_0 \cdots \gamma_{n_k-j-1}|\lambda_1 \cdots \lambda_{n_k-j}|^{p^*}\rightarrow \infty.
\]
By an obvious variant of \cite[Lemma 4.2]{GrPe11} we obtain an increasing sequence $(m_k)_k$ such that, for each $j\in \NN$,
\[
\gamma_0 \cdots \gamma_{m_k+j-1}|\lambda_1 \cdots \lambda_{m_k+j}|^{p^*}\rightarrow \infty,
\]
which implies that, for each $v\in \gen_j$,
\[
|\Chi^{m_k}(v)|\,|\lambda_{j+1} \cdots \lambda_{m_k+j}|^{p^*}\rightarrow \infty.
\]
By Theorem \ref{t-charHCrootalt}, this shows that $B_{\lambda}$ is hypercyclic.
\end{proof}

Incidentally, as the proof shows, the symmetry of the directed tree is not needed when $X=\ell^1(V)$.

\begin{example}
If $(V,E)$ is a rooted directed tree with finitely many children for each vertex and $q\geq 1$ a real constant, the \textit{Dirichlet shift} according to \cite{CPT17} is the weighted forward shift $S_{\lambda,q}$ on $\ell^2(V)$, where
$$
\lambda_{u,q}=\frac{1}{\sqrt{|\Chi(v)|}}\sqrt{\frac{n_v+q}{n_v+1}}
$$
for $v\in \gen_{n_v}$ and $u\in \Chi(v)$. Then $S_{\lambda,q}$ is bounded with $\|S_{\lambda,q}\|=\sqrt{q}$. Being a weighted forward shift on a rooted tree, the Dirichlet shift is not hypercyclic. The adjoint of $S_{\lambda,q}$, which in the unilateral case with $q=2$ is called the \textit{Bergman backward shift} \cite{BoSh00}, is $B_{\lambda,q}$. When $q=1$, $B_{\lambda,q}$ is not hypercyclic since $\|B_{\lambda,q}\|=1$. If $q>1$, for $n\in \mathbb{N}$ and $v\in V$, we have that
$$
\sum_{u\in \Chi^n(v)}|\lambda_q(v\rightarrow u)|^2=\frac{\prod_{k=0}^{n-1}(n_v+q+k)}{\prod_{k=1}^n(n_v+k)}\sum_{u\in \Chi^n(v)}\frac{1}{\prod_{k=1}^n|\Chi(\prt^k(u))|}.
$$
Now, 
$$
\sum_{u\in \Chi^n(v)}\frac{1}{\prod_{k=1}^n|\Chi(\prt^k(u))|}=1
$$
and
$$
\frac{\prod_{k=0}^{n-1}(n_v+q+k)}{\prod_{k=1}^n(n_v+k)}=\frac{n_v!}{\Gamma(n_v+q)}\frac{\Gamma(n_v+q+n)}{(n_v+n)!}\sim \frac{n_v!}{\Gamma(n_v+q)}n^{q-1}\rightarrow \infty.
$$
By Theorem \ref{t-charHCrootalt}, we deduce that $B_{\lambda,q}$ is mixing in this case.
\end{example}

Returning to the general situation, we see that the behaviour of the numbers
\begin{equation}\label{eq-Lambda}
\Lambda_{v,n} = \sum_{u\in \Chi^{n}(v)} |\lambda(v\to u)|,\; v\in V, n\geq 1
\end{equation}
determines hypercyclicity (and the mixing property) of $B_\lambda$ on $c_0(V)$. But they also play a certain role for the other spaces.
Part (a) of the following result is due to Martínez-Avenda\~{n}o \cite[Theorem 5.4]{Mar17}. An analogous result holds for the mixing property.

\begin{corollary}\label{c-charHCrootalt}
Let $(V,E)$ be a rooted directed tree and $\lambda$ a weight. Let  $B_\lambda$ be a bounded operator on $\ell^p(V)$, $1\leq p<\infty$. 

{\rm (a)} If $B_\lambda$ is hypercyclic, then there is an increasing sequence $(n_k)_k$ of positive integers such that, for each $v\in V$, 
\[
\Lambda_{v,n_k} \to \infty
\]
as $k\to\infty$.

{\rm (b)} Suppose that every vertex has at most finitely many children. If there is an increasing sequence $(n_k)_k$ of positive integers such that, for each $v\in V$,
\[
\frac{1}{|\Chi^{n_k}(v)|^{1/p}}\Lambda_{v,n_k}\to \infty
\]
as $k\to\infty$, then $B_\lambda$ is hypercyclic.
\end{corollary}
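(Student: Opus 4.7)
Both parts would follow from Theorem \ref{t-charHCrootalt} combined with standard inequalities relating the $\ell^1$- and $\ell^{p^*}$-norms of the sequence $(|\lambda(v\to u)|)_{u\in\Chi^{n_k}(v)}$.

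For (a), I would apply Theorem \ref{t-charHCrootalt} to obtain an increasing sequence $(n_k)_k$ witnessing hypercyclicity and then argue, in each of the two cases $p=1$ and $1<p<\infty$, that $\Lambda_{v,n_k}\to\infty$. When $p=1$, the theorem gives $\sup_{u\in\Chi^{n_k}(v)}|\lambda(v\to u)|\to\infty$, and this supremum is trivially dominated by the nonnegative sum $\Lambda_{v,n_k}$. When $1<p<\infty$, the theorem gives $\sum_u|\lambda(v\to u)|^{p^*}\to\infty$, and since $p^*\geq 1$ the standard norm-monotonicity inequality $\|x\|_{p^*}\leq\|x\|_1$ for nonnegative sequences yields
\[
\Lambda_{v,n_k}^{p^*}\geq\sum_{u\in\Chi^{n_k}(v)}|\lambda(v\to u)|^{p^*}\to\infty,
\]
which forces $\Lambda_{v,n_k}\to\infty$.

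For (b), the hypothesis that every vertex has only finitely many children ensures that each $\Chi^{n_k}(v)$ is a finite set, so H\"older's inequality is available with genuinely finite right-hand side. When $1<p<\infty$, applying H\"older with conjugate exponents $p^*$ and $p$ to the product $|\lambda(v\to u)|\cdot 1$ on $\Chi^{n_k}(v)$ gives
\[
\Lambda_{v,n_k}\leq\Big(\sum_{u\in\Chi^{n_k}(v)}|\lambda(v\to u)|^{p^*}\Big)^{1/p^*}|\Chi^{n_k}(v)|^{1/p},
\]
so that the hypothesis $\Lambda_{v,n_k}/|\Chi^{n_k}(v)|^{1/p}\to\infty$ implies $\sum_{u\in\Chi^{n_k}(v)}|\lambda(v\to u)|^{p^*}\to\infty$. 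When $p=1$, a direct averaging argument gives $\sup_u|\lambda(v\to u)|\geq \Lambda_{v,n_k}/|\Chi^{n_k}(v)|\to\infty$. In either case, condition (iii) of Theorem \ref{t-charHCrootalt} is satisfied and hypercyclicity follows. There is no genuine obstacle here; the only point requiring care is choosing the right inequality in each direction and noting that the finiteness of $|\Chi^{n_k}(v)|$ is essential in part (b), as otherwise the factor $|\Chi^{n_k}(v)|^{1/p}$ in H\"older would be infinite and the bound would become vacuous.
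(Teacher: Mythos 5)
Your proposal is correct and follows essentially the same route as the paper: part (a) is deduced from Theorem \ref{t-charHCrootalt} via the monotonicity of $p$-norms (the inequalities $\sup_u|\lambda(v\to u)|\leq\Lambda_{v,n_k}$ and $\bigl(\sum_u|\lambda(v\to u)|^{p^*}\bigr)^{1/p^*}\leq\Lambda_{v,n_k}$), and part (b) from the same theorem together with H\"older's inequality $\Lambda_{v,n}\leq\bigl(\sum_{u\in\Chi^n(v)}|\lambda(v\to u)|^{p^*}\bigr)^{1/p^*}|\Chi^n(v)|^{1/p}$, with the averaging interpretation when $p=1$. Your remark that the finiteness of $\Chi^{n_k}(v)$ is only needed in part (b) is a correct and worthwhile precision.
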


\begin{proof} (a) This follows from Theorem \ref{t-charHCrootalt} and the monotonicity of $p$-norms ($1\leq p\leq\infty$) on finite-dimensional spaces $\KK^J$.

(b) This follows from Theorem \ref{t-charHCrootalt} and H\"older's inequality, which implies that
\[
\Lambda_{v,n}\leq \Big(\sum_{u\in \Chi^{n}(v)} |\lambda(v\to u)|^{p^*}\Big)^{1/p^*} |\Chi^{n}(v)|^{1/p},
\]
with the usual interpretation when $p=1$.
\end{proof}

It is easy to see that neither condition in the corollary is a characterization.

\begin{example} 
Let $(V,E)$ be the rooted binary tree, that is, the directed tree with a root for which every vertex has exactly two children. Let $1\leq p<\infty$.

(a) There is a non-hypercyclic weighted backward shift $B_\lambda$ on $\ell^p(V)$ such that, for any $v\in V$,
\[
\Lambda_{v,n}\to \infty\; \text{ as $n\to\infty$}.
\]
An example is the Rolewicz operator $\lambda B$ with $\frac{1}{2}<|\lambda|\leq 1$ if $p=1$ and $\frac{1}{2}<|\lambda|\leq 2^{-1/p^*}$ if $p>1$, see Corollary \ref{c-rol}.

(b) There is a hypercyclic (even mixing) weighted backward shift $B_\lambda$ on $\ell^p(V)$, $1\leq p<\infty$, such that, for any $v\in V$,
\[
\frac{1}{|\Chi^{n}(v)|^{1/p}}\Lambda_{v,n}\to 0\; \text{ as $n\to\infty$}.
\]
Let us define the weight $\lambda$ such that, for each $v\in V$, one of its two children has weight $a>0$, the other one has weight $b>0$. Let $v\in V$. Since there are $\binom{n}{k}$ branches of length $n$ that start form $v$ and carry exactly $k$ times the weight $a$, we have that
\[
\frac{1}{|\Chi^{n}(v)|^{1/p}}\Lambda_{v,n}= \frac{1}{2^{n/p}} \sum_{k=0}^n \binom{n}{k}a^kb^{n-k} = \frac{(a+b)^n}{2^{n/p}},
\]
while, similarly,
\[
\sum_{u\in \Chi^{n}(v)} |\lambda(v\to u)|^{p^*} = (a^{p^*}+b^{p^*})^n,\quad p>1,
\]
which has to be replaced by $\sup_{u\in \Chi^{n}(v)} |\lambda(v\to u)| = \max(a,b)^n$ if $p=1$. Now choose $a$ and $b$ so that the first expression tends to 0 and the second tends to $\infty$ as $n\to\infty$.
\end{example}

\begin{remark}\label{r-mart_suff}
Martínez-Avenda\~{n}o \cite[Theorem 5.1]{Mar17} has obtained another sufficient condition for hypercyclicity of backward shifts; his condition can  be deduced by another application of H\"older's inequality on Theorem \ref{t-charHCrootalt}. Indeed, suppose again that every vertex has at most finitely many children. If $B_\lambda$ is a bounded operator on $X=\ell^p(V)$, $1\leq p <\infty$, or $X=c_0(V)$, and if there is an increasing sequence $(n_k)_k$ of positive integers such that, for each $v\in V$,
\begin{align*}
    \frac{1}{|\Chi^{n_k}(v)|^p}\sum_{u\in \Chi^{n_k}(v)} \frac{1}{|\lambda(v\to u)|^{p}} \to 0, &\text{ if $X=\ell^p(V)$, $1\leq p<\infty$,}\\
    |\Chi^{n_k}(v)|\inf_{u\in \Chi^{n_k}(v)} |\lambda(v\to u)| \to \infty, &\text{ if $X=c_0(V)$}
\end{align*}
as $k\to\infty$, then $B_\lambda$ is hypercyclic on $X$. The analogous result holds for the mixing property.
\end{remark}

\begin{example}
There is a natural way to join two classical weighted backward shifts $T_1$ and $T_2$ on $\ell^p(\NN)$, $1\leq p<\infty$, or $c_0(\NN)$ into a single weighted backward shift on a rooted directed tree $V$. Indeed, take $V$ to be the tree where only the root has two children, while all other vertices have exactly one child. Then we can identify the two branches of $V$ with two copies of $\NN=\{1,2,3,\ldots\}$, with the two elements 1 being the children of the root, and we put on the branches the shifts $T_1$ and $T_2$, respectively. Then, by Theorem \ref{t-charHCrootalt}
and \cite[Theorem 2.8]{Salas}, the resulting weighted shift $B_\lambda$ on $V$ (with $B_\lambda f(\rt)$ defined arbitrarily) is easily seen to be hypercyclic if and only if the direct sum $T_1\oplus T_2$ given by $(T_1\oplus T_2)(x,y)=(T_1x,T_2y)$ is hypercyclic. 

It is interesting to note that there are hypercyclic weighted backward shifts $T_1$ and $T_2$ on $\ell^2(\NN)$ so that $T_1\oplus T_2$ is not hypercyclic, see \cite[unilateral version of Corollary 2.6]{Salas}, in which case the joined shift $B_\lambda$ is not hypercyclic. Of course, if one of the operators $T_1$ or $T_2$ is even mixing, then $T_1\oplus T_2$ is hypercyclic and then so is $B_\lambda$.
\end{example}

A natural question arises: given any rooted directed tree, does it support a hypercyclic or even mixing weighted backward shift? We will consider this question for arbitrary directed trees in Section \ref{s-exist}.

\section{Dynamics of weighted backward shifts - the unrooted case}\label{s-unrooted}

We now address the case of unrooted trees. The situation here is substantially more complicated. We first need the following variant of the Hypercyclicity Criterion, see Theorem \ref{t-hypcrit}. The usual condition that, for all $x\in X_0$,
\[
T^{n_k}x\to 0,\quad k\to\infty,
\]
is replaced by one of the form
\[
T^{n_k}x_k\to 0,\quad k\to\infty,
\]
where $x_k\to x$.

\begin{proposition}\label{p-criterion}
Let $X$ be a separable Banach space and $T$ a bounded operator on $X$. Suppose that there exist dense subsets $X_0, Y_0$ of $X$, an increasing sequence $(n_k)_k$ of positive integers, and maps $I_{n_k}:X_0\rightarrow X$ and $R_{n_k}:Y_0\rightarrow X$, $k\geq 1$, such that, for any $x\in X_0$ and $y\in Y_0$,
\begin{enumerate}[{\rm (i)}]
    \item $I_{n_k}x\rightarrow x$, 
    \item $T^{n_k}I_{n_k}x\rightarrow 0$,
    \item $R_{n_k}y\rightarrow 0$, and
    \item $T^{n_k}R_{n_k}y\rightarrow y$
\end{enumerate}
as $k\to\infty$. Then $T$ is weakly mixing.

If the conditions hold for the full sequence $(n_k)_k=(n)_n$ then $T$ is mixing.
\end{proposition}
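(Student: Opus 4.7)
The plan is to verify weak mixing by establishing topological transitivity of $T \oplus T$ on $X \times X$ directly from the definition, rather than trying to massage the hypotheses to fit Theorem~\ref{t-hypcrit}. The asymmetry between $I_{n_k}$ (which only approximates the identity on $X_0$) and $R_{n_k}$ (a right inverse up to a small error) makes a direct appeal to the classical criterion awkward, but transitivity of $T \oplus T$ can be obtained by a single computation that combines the two approximations into a common witness.

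Given non-empty open sets $U_1, U_2, V_1, V_2 \subset X$, I would use density of $X_0$ and $Y_0$ to pick $x_i \in X_0 \cap U_i$ and $y_i \in Y_0 \cap V_i$ for $i=1,2$, and then form the candidate vectors
\[
z_i^{(k)} := I_{n_k} x_i + R_{n_k} y_i, \qquad i=1,2,\ k \geq 1.
\]
Hypotheses (i) and (iii) together give $z_i^{(k)} \to x_i$, so $z_i^{(k)} \in U_i$ for all sufficiently large $k$, while (ii) and (iv) give
\[
T^{n_k} z_i^{(k)} = T^{n_k} I_{n_k} x_i + T^{n_k} R_{n_k} y_i \to 0 + y_i = y_i,
\]
so $T^{n_k} z_i^{(k)} \in V_i$ for all sufficiently large $k$. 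Choosing one $k$ large enough that both inclusions hold for $i=1,2$ simultaneously yields $(T \oplus T)^{n_k}(U_1 \times U_2) \cap (V_1 \times V_2) \neq \varnothing$, establishing transitivity of $T \oplus T$ and hence weak mixing of $T$.

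The mixing statement follows from exactly the same argument applied to the full sequence $(n_k)_k = (n)_n$: the conclusion then reads that for any pair of non-empty open sets $U, V \subset X$ there is some $N$ such that $T^n(U) \cap V \neq \varnothing$ for all $n \geq N$, which is the definition of the mixing property. I do not expect a substantive obstacle here. The only point to be mindful of is that the maps $I_{n_k}$ and $R_{n_k}$ are not required to be linear or continuous, but this is irrelevant to the argument: they are only evaluated at the fixed elements $x_i$ and $y_i$, and the algebra above uses nothing beyond linearity of $T^{n_k}$ and the four hypothesised limits.
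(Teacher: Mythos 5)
Your argument is correct and is essentially identical to the paper's own proof: both form the witnesses $z_{i,k}=I_{n_k}x_i+R_{n_k}y_i$, use (i)--(iv) to get $z_{i,k}\to x_i$ and $T^{n_k}z_{i,k}\to y_i$, and conclude topological transitivity of $T\oplus T$ (hence weak mixing), with the mixing case handled by the same computation along the full sequence.
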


\begin{proof}
Let $U_1, U_2, V_1, V_2$ be non-empty open subsets of $X$. Find $x_j\in U_j\cap X_0$ and $y_j\in V_j\cap Y_0$ and set $z_{j,k}:=I_{n_k}x_j+R_{n_k}y_j$ for $j=1,2$, $k\geq 1$. Then $z_{j,k}\rightarrow x_j$ and $T^{n_k}z_{j,k}\rightarrow y_{j}$ as $k\rightarrow \infty$, so that 
there is some $k\geq 1$ such that $T^{n_k}(U_j)\cap V_j\neq \varnothing$, $j=1,2$. This shows that $T\oplus T$ is topologically transitive (and $T$ is mixing if $(n_k)_k=(n)_n$).
\end{proof}

By the way, since every weakly mixing operator satisfies the conditions of the Hypercyclicity Criterion, see \cite{BePe99}, the above version is only formally weaker. 

Yet, it is better adapted to our present needs: it allows us to characterize when a weighted backward shift is hypercyclic (or, equivalently, weakly mixing) and when it is mixing, under the assumption that the tree is unrooted. We start again with the unweighted shift.

\begin{theorem}\label{t-charHCunroot}
Let $(V,E)$ be an unrooted directed tree and $\mu$ a weight. Let $X=\ell^p(V,\mu)$, $1\leq p<\infty$, or $X=c_0(V,\mu)$, and suppose that the backward shift $B$ is a bounded operator on $X$. 

{\rm (a)} The following assertions are equivalent:
\begin{enumerate}[{\rm (i)}]
    \item $B$ is hypercyclic;
    \item $B$ is weakly mixing;
    \item there is an increasing sequence $(n_k)_k$ of positive integers such that, for each $v\in V$, we have as $k\to\infty$,
		\begin{align*}
		\left.
		\begin{aligned} &\inf_{u\in \Chi^{n_k}(v)} |\mu_u| \to 0\quad \text{and}\\ 
		&\min\Big(|\mu_{\prt^{n_k}(v)}|, \inf_{u\in \Chi^{n_k}(\prt^{n_k}(v))} |\mu_u|\Big) \to 0
		\end{aligned}
		\right\} &\text{ if $X=\ell^1(V,\mu)$;}
		\\[2mm]		
	   \left.
		\begin{aligned} &\sum_{u\in \Chi^{n_k}(v)} \frac{1}{|\mu_u|^{p^*}} \to \infty\quad \text{and}\\
		&\frac{1}{|\mu_{\prt^{n_k}(v)}|^{p^*}}+ \sum_{u\in \Chi^{n_k}(\prt^{n_k}(v))} \frac{1}{|\mu_u|^{p^*}} \to \infty
		\end{aligned}
		\right\} &\text{ if $X=\ell^p(V,\mu)$, $1<p<\infty$};
		\\[2mm] 
		\left.
		\begin{aligned} &\sum_{u\in \Chi^{n_k}(v)} \frac{1}{|\mu_u|} \to \infty\quad \text{and}\\ 
		&\frac{1}{|\mu_{\prt^{n_k}(v)}|}+ \sum_{u\in \Chi^{n_k}(\prt^{n_k}(v))} \frac{1}{|\mu_u|} \to \infty
		\end{aligned}
		\right\} &\text{ if $X=c_0(V,\mu)$.}
   	    \end{align*}
\end{enumerate}

{\rm (b)} $B$ is mixing if and only if condition {\rm (iii)} holds for the full sequence $(n_k)_k=(n)_n$.
\end{theorem}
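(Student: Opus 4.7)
The implication (iii) $\Rightarrow$ (ii) is handled by Proposition~\ref{p-criterion} applied with $X_0 = Y_0 = \vect\{e_v : v\in V\}$. For the right-inverses, the first condition of (iii), together with Lemma~\ref{l-revhol} applied to $J=\Chi^{n_k}(v)$, produces for each $v$ a vector $R_{n_k}e_v$ supported on $\Chi^{n_k}(v)$ of unit $\ell^1$-sum whose $X$-norm tends to zero; since $B$ is unweighted, $B^{n_k}R_{n_k}e_v=e_v$. The identity approximants $I_{n_k}$ exploit the two-term form of the second condition: writing $w_k=\prt^{n_k}(v)$, for each $k$ at least one of the two summands is at least half of the total. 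On the subsequence where the first summand dominates, $|\mu_{w_k}|\to 0$, and we set $I_{n_k}e_v:=e_v$, so that $\|B^{n_k}I_{n_k}e_v\|=|\mu_{w_k}|\to 0$. On the complementary subsequence the sum over $\Chi^{n_k}(w_k)$ tends to infinity, and Lemma~\ref{l-revhol} provides $h_{v,n_k}$ supported on $\Chi^{n_k}(w_k)$ with $\sum_u h_{v,n_k}(u)=1$ and $\|h_{v,n_k}\|_X\to 0$; setting $I_{n_k}e_v:=e_v-h_{v,n_k}$ gives $I_{n_k}e_v\to e_v$ together with $B^{n_k}I_{n_k}e_v=e_{w_k}-e_{w_k}=0$. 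Extending $R_{n_k},I_{n_k}$ linearly verifies all four hypotheses of the criterion; the $\ell^1$ and $c_0$ cases are analogous (using the finite-support version of Lemma~\ref{l-revhol} for $c_0$). Part~(b) follows from the same construction using the full sequence.

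For (i) $\Rightarrow$ (iii), one uses topological transitivity of $B$. Let $(F_k)$ be an increasing sequence of finite subsets of $V$ with $\bigcup F_k=V$, let $N_k\to\infty$ be chosen large enough that $\prt^n(v)\notin F_k$ whenever $v\in F_k$ and $n\geq N_k$, and let $\eps_k,\delta_k\to 0$ with $\delta_k<\tfrac12\min_{v\in F_k}|\mu_v|$. By transitivity, pick $f_k\in X$ and $n_k\geq N_k$ with $\|f_k\|_X<\eps_k$ and $\|B^{n_k}f_k-\sum_{v\in F_k}e_v\|_X<\delta_k$; after passing to a subsequence we may assume $(n_k)$ is strictly increasing. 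For fixed $v\in V$ and $k$ so large that $v\in F_k$, the $v$-coordinate bound forces $\bigl|\sum_{u\in\Chi^{n_k}(v)}f_k(u)\bigr|\geq 1/2$, and combining this with $\|f_k\|_X<\eps_k$ via H\"older (i.e.\ Lemma~\ref{l-revhol}) yields the first condition of~(iii), exactly as in the rooted case.

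For the second condition, the choice of $N_k$ ensures $\prt^{n_k}(v)\notin F_k$ and hence $|S_k|\cdot|\mu_{\prt^{n_k}(v)}|<\delta_k$, where $S_k:=\sum_{u\in\Chi^{n_k}(\prt^{n_k}(v))}f_k(u)$. Defining $h_k$ on $J_k:=\{\prt^{n_k}(v)\}\cup\Chi^{n_k}(\prt^{n_k}(v))$ by $h_k:=f_k|_{\Chi^{n_k}(\prt^{n_k}(v))}$ on the children set and $h_k(\prt^{n_k}(v)):=1-S_k$, one has $\sum_u h_k(u)=1$ and $\|h_k\|_X\leq \eps_k+|\mu_{\prt^{n_k}(v)}|+\delta_k$; Lemma~\ref{l-revhol} then delivers the second condition of~(iii) along any subsequence on which $|\mu_{\prt^{n_k}(v)}|\to 0$. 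The main obstacle is the complementary regime, where $|\mu_{\prt^{n_k}(v)}|$ stays bounded away from zero along infinitely many $k$: in that case $\sum_{u\in\Chi^{n_k}(\prt^{n_k}(v))}|\mu_u|^{-p^*}\to\infty$ must be forced directly, which requires a second use of transitivity, producing a companion pair $(g_k,n_k)$ with $g_k\approx\sum_{v\in F_k}e_v$ and $B^{n_k}g_k\approx 0$ at a common subsequence of times (obtained by diagonal thinning of the two sets of TT-times). The ``reversed'' coordinate estimates then supply a vector supported on $\Chi^{n_k}(\prt^{n_k}(v))$ with $\ell^1$-sum bounded below and vanishing $X$-norm, which by duality gives the desired $\ell^{p^*}$-bound. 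The $\ell^1$ and $c_0$ cases are parallel, and part~(b) follows by replacing topological transitivity with the mixing property throughout.
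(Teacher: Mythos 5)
Your first half, (iii) $\Rightarrow$ (ii), is essentially the paper's argument: Proposition \ref{p-criterion} with $X_0=Y_0=\vect\{e_v:v\in V\}$, right inverses $R_{n_k}$ built from the first condition via Lemma \ref{l-revhol}, and the dichotomy according to which of the two terms in the second condition dominates to define $I_{n_k}e_v$ as either $e_v$ or $e_v$ minus a normalized vector supported on $\Chi^{n_k}(\prt^{n_k}(v))$. The paper does the same per $k$ rather than along subsequences; that difference is cosmetic and your construction is sound.

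The genuine gap is in (i) $\Rightarrow$ (iii), precisely in the regime you call ``the main obstacle''. There you invoke a second application of topological transitivity, producing a companion pair $(g_k,n_k)$ with $g_k\approx\sum_{v\in F_k}e_v$ and $B^{n_k}g_k\approx 0$ \emph{at the same times} $n_k$ as the first pair, ``obtained by diagonal thinning of the two sets of TT-times''. Transitivity only tells you that each return set $N(U_1,W_1)$ and $N(U_2,W_2)$ is infinite; two infinite subsets of $\NN$ need not meet, and no thinning manufactures common elements. The property that $N(U_1,W_1)\cap N(U_2,W_2)\neq\varnothing$ for all pairs of nonempty open sets is exactly the standard characterization of weak mixing, i.e.\ assertion (ii), which is what you are trying to prove — so as written this step is circular. (It would be legitimate in part (b), where mixing makes both return sets cofinite, but not in part (a).) The paper avoids the problem with a \emph{single} application of transitivity per triple $(F,N,\eps)$: the source ball is centered not at $0$ but at $\sum_{v\in G}e_v$, where $G\subset F$ contains one vertex of $F$ per generation, and the target ball at $\sum_{v\in F}e_v$. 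From one vector $f$ with $\|f-\sum_{v\in G}e_v\|<\eps$ and $\|B^nf-\sum_{v\in F}e_v\|<\delta$, the coordinates of $B^nf$ at $v\in F$ together with the smallness of $f$ on $\Chi^n(v)$ (guaranteed by $\Chi^n(F)\cap G=\varnothing$ for $n$ large) yield the first condition, while the coordinate at $\prt^n(v)\notin F$ combined with the summand $e_v$ present in the source forces the mass of $f$ on $\Chi^n(\prt^n(v))$ to nearly cancel $1$, and Lemma \ref{l-revhol} then gives the second condition whenever $|\mu_{\prt^n(v)}|$ is not already below $2\eps$ — both estimates at the same time $n$. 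Your scheme can be repaired along these lines: take your companion pair alone but with the \emph{target} ball centered at $\sum_{v\in F_k}e_v$ rather than at $0$; that single pair delivers both conditions simultaneously. (Also note that your intermediate $h_k$ construction is superfluous: on a subsequence where $|\mu_{\prt^{n_k}(v)}|\to 0$ the second condition holds trivially.)
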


\begin{proof}
(a) The proof follows the same overall strategy as that of Theorem \ref{t-charHCroot}, but it is technically more demanding. As in that proof, we will only consider the case when $X=\ell^p(V)$, $1\leq p<\infty$.

(iii) $\Rightarrow$ (ii). This time we apply the Hypercyclicity Criterion in the form of Proposition \ref{p-criterion}. Let $X_0=Y_0=\vect\{e_v: v\in V\}$.

Since the first part of the characterizing condition in (iii) is identical to the condition in Theorem \ref{t-charHCroot}(iii) we can again define maps $R_{n_k}:Y_0\to \ell^p(V,\mu)$, $k\geq 1$, such that, for all $g\in Y_0$,
\[
R_{n_k}g\to 0\text{ as $k\to\infty$} \quad\text{and}\quad B^{n_k}R_{n_k} g=g, k\geq 1.
\]

Next, for $v\in V$ and $k\geq 1$, let
\[
M_{v,k} := \frac{1}{|\mu_{\prt^{n_k}(v)}|^{p^*}}+ \sum_{u\in \Chi^{n_k}(\prt^{n_k}(v))} \frac{1}{|\mu_u|^{p^*}};
\]
note that $M_{v,k}<\infty$ by the continuity of $B^{n_k}$. Then either
\begin{equation}\label{eq-a}
\frac{1}{|\mu_{\prt^{n_k}(v)}|^{p^*}}\geq \frac{M_{v,k}}{2}\tag{$\alpha$}
\end{equation}
or 
\begin{equation}\label{eq-b}
\sum_{u\in \Chi^{n_k}(\prt^{n_k}(v))} \frac{1}{|\mu_u|^{p^*}}>\frac{M_{v,k}}{2}.\tag{$\beta$}
\end{equation}

In the case of $(\alpha)$, let us define
\[
f_{v,n_k}=e_v.
\]
Then we have that $B^{n_k}f_{v,n_k} = e_{\prt^{n_k}(v)}$ and thus
\[
\|B^{n_k}f_{v,n_k}\| =|\mu_{\prt^{n_k}(v)}|\leq \Big( \frac{2}{M_{v,k}}\Big)^{1/p^*}.
\]

In the case of $(\beta)$ there is by Lemma \ref{l-revhol}, applied to $J=\Chi^{n_k}(\prt^{n_k}(v))$, some $h_{v,n_k}\in \mathbb{K}^V$ of support in $\Chi^{n_k}(\prt^{n_k}(v))$ such that
\begin{equation}\label{eq-two}
\sum_{u\in \Chi^{n_k}(\prt^{n_k}(v))}|h_{v,n_k}(u)|=1
\end{equation}
and
\[
\Big(\sum_{u\in \Chi^{n_k}(\prt^{n_k}(v))}|h_{v,n_k}(u)\mu_u|^p\Big)^{1/p} \leq \Big( \frac{2}{M_{v,k}}\Big)^{1/p^*}.
\]
(For $X=c_0(V,\mu)$, we choose $h_{v,n_k}$ to be in addition of finite support so that $h_{v,n_k}\in c_0(V,\mu)$; see the final assertion in Lemma \ref{l-revhol}.) We may suppose that $h_{v,n_k}(w)\leq 0$ for all $w\in V$. We then define
\[
f_{v,n_k}=e_v+h_{v,n_k},
\]
so that
\[
\|f_{v,n_k} - e_v\| \leq \Big( \frac{2}{M_{v,k}}\Big)^{1/p^*}
\]
and, by \eqref{eq-two},
\[
B^{n_k}f_{v,n_k} = \Big(1+\sum_{u\in \Chi^{n_k}(\prt^{n_k}(v))}h_{v,n_k}(u)\Big)e_{\prt^{n_k}(v)} =0.
\]

In both cases we therefore have that 
\[
\|f_{v,n_k} - e_v\| \leq \Big( \frac{2}{M_{v,k}}\Big)^{1/p^*}\quad\text{and}\quad \|B^{n_k}f_{v,n_k}\| \leq \Big( \frac{2}{M_{v,k}}\Big)^{1/p^*}.
\]
We can then define the maps $I_{n_k}:X_0\rightarrow \ell^p(V,\mu)$, $k\geq 1$, by setting
\[
I_{n_k}e_v=f_{v,n_k}, v\in V
\]
and extending linearly to $X_0$. Then we have for all $f\in X_0$
\[
I_{n_k}f \to f\quad\text{and}\quad B^{n_k}I_{n_k}f \to 0
\]
as $k\to\infty$. Thus all the hypotheses of Proposition \ref{p-criterion} are satisfied, so that $T$ is weakly mixing.

(i) $\Rightarrow$ (iii).  Let $F\subset V$ be finite, $N\geq 1$ and $\eps >0$. We choose some $\delta>0$ with $\delta\leq\eps$ such that, for any $h=(h(v))_{v\in V}\in \ell^p(V, \mu)$, $\|h-\sum_{v\in F}e_v\|<\delta$ implies that $|h(v)|\geq \tfrac{1}{2}$ for all $v\in F$. We also form a subset $G$ of $F$ by picking exactly one vertex from $F$ per generation; that is, if $u\neq v$ in $G$ then $u\nsim v$, and for each $v\in F$ there is some $u\in G$ with $u\sim v$. 

By topological transitivity of $B$ there is now some $f\in \ell^p(V, \mu)$ and some $n\geq N$ such that
\begin{equation}\label{eq-tt}
\Big\|f-\sum_{v\in G}e_v\Big\|< {\eps}\quad\text{and}\text \quad \Big\|B^nf-\sum_{v\in F}e_v\Big\|<\delta;
\end{equation}
by making $n$ bigger, if necessary, we may assume that 
\begin{equation}\label{eq-tt2}
\Chi^n(F)\cap G=\varnothing\quad\text{and}\text \quad \prt^n(G)\cap F=\varnothing
\end{equation}
and 
\begin{equation}\label{eq-tt3}
u,v\in F, u\sim v \Longrightarrow \prt^n(u)=\prt^n(v).
\end{equation}

We claim that this implies that, for all $v\in F$, 
\begin{equation}\label{eq-cond1}
\sum_{u\in \Chi^{n}(v)} \frac{1}{|\mu_u|^{p^*}} > \frac{1}{(2\eps)^{p^*}}
\end{equation}
and
\begin{equation}\label{eq-cond2}
\frac{1}{|\mu_{\prt^{n}(v)}|^{p^*}}+ \sum_{u\in \Chi^{n}(\prt^{n}(v))} \frac{1}{|\mu_u|^{p^*}} \geq \frac{1}{(2\eps)^{p^*}}.
\end{equation}

Once this is shown, it suffices to apply the result to an increasing sequence $(F_k)_k$ of finite subsets of $V$ with $\bigcup_{k\geq 1}F_k=V$, to an increasing sequence $(N_k)_k$ of positive integers, and to a positive sequence $(\eps_k)_k$ tending to 0 in order to deduce the characterizing condition in (iii).

Thus, let us first show \eqref{eq-cond1}. To this end, let $v\in F$. We restrict $f$ to $\Chi^{n}(v)$; then \eqref{eq-tt} with \eqref{eq-tt2} imply that $\|f\chi_{\Chi^n(v)}\|< \eps$. We can now argue exactly as in the proof of Theorem \ref{t-charHCroot} to deduce, using \eqref{eq-tt}, that
\[
\Big(\sum_{u\in \Chi^{n}(v)} \frac{1}{|\mu_u|^{p^*}}\Big)^{-1/p^*} < 2\eps,  
\]
and hence \eqref{eq-cond1}.

As for \eqref{eq-cond2}, it follows from \eqref{eq-tt3} that it suffices, by definition of $G$, to obtain it only for all the vertices in $G$. Thus let $v\in G$. We define
\[
g= f\chi_{\Chi^n(\prt^n(v))} - e_v.
\]
It follows from \eqref{eq-tt} that
\begin{equation}\label{eq-gp}
\Big(\sum_{u\in \Chi^n(\prt^n(v))} |g(u)\mu_u| ^p\Big)^{1/p} = \|g\| <\eps;
\end{equation}
notice that no other $u\in G$, $u\neq v$, belongs to $\Chi^n(\prt^n(v))$. In addition we have that
\[
(B^n f)(\prt^n(v)) = \sum_{u\in \Chi^n(\prt^n(v))} f(u) = 1+\sum_{u\in \Chi^n(\prt^n(v))} g(u).
\]
By \eqref{eq-tt} with \eqref{eq-tt2} this gives us that
\begin{equation}\label{eq-gu}
|\mu_{\prt^n(v)}|\Big|1+\sum_{u\in \Chi^n(\prt^n(v))} g(u) \Big| <\delta\leq \eps.
\end{equation}

We can now assume that $|\mu_{\prt^{n_k}(v)}|> 2\eps$; for if $|\mu_{\prt^{n}(v)}|\leq 2\eps$, then \eqref{eq-cond2} holds trivially. But then \eqref{eq-gu} implies that
\[
\Big|1+\sum_{u\in \Chi^n(\prt^n(v))} g(u) \Big| < \frac{1}{2},
\]
which gives us that
\[
c_v:= \sum_{u\in \Chi^n(\prt^n(v))} |g(u)|\geq \Big|\sum_{u\in \Chi^n(\prt^n(v))} g(u) \Big| > \frac{1}{2}.
\]
Then $\sum_{u\in \Chi^n(\prt^n(v))}\frac{1}{c_v} |g(u)|=1$. Choosing, in Lemma \ref{l-revhol}, $J=\Chi^n(\prt^n(v))$ and for $x$ the restriction of $\frac{1}{c_v}|g|$ to $J$ we obtain with \eqref{eq-gp} that
\[
\Big(\sum_{u\in \Chi^{n}(\prt^{n}(v))} \frac{1}{|\mu_u|^{p^*}}\Big)^{-1/p^*} \leq \frac{1}{c_v} \Big(\sum_{u\in \Chi^{n}(\prt^{n}(v))} |g(u)\mu_u| ^p\Big)^{1/p} < 2\eps,
\]
so that \eqref{eq-cond2} also holds in this case. This completes the proof of (a).

(b) This can be shown exactly as in the proof of (a) noting the final assertion in Proposition \ref{p-criterion} and the definition of the mixing property.
\end{proof}

We shift again the weights from the space to the operator, arriving easily at the following.

\begin{theorem}\label{t-charHCunrootalt}
Let $(V,E)$ be an unrooted directed tree and $\lambda$ a weight. Let $X=\ell^p(V)$, $1\leq p<\infty$, or $X=c_0(V)$, and suppose that the weighted backward shift $B_\lambda$ is a bounded operator on $X$. 

{\rm (a)} The following assertions are equivalent:
\begin{enumerate}[{\rm (i)}]
    \item $B_\lambda$ is hypercyclic;
    \item $B_\lambda$ is weakly mixing;
    \item there is an increasing sequence $(n_k)_k$ of positive integers such that, for each $v\in V$, we have as $k\to\infty$,
		\begin{align*}
		\left.
		\begin{aligned} &\sup_{u\in \Chi^{n_k}(v)} |\lambda(v\to u)| \to \infty\quad \text{and}\\ 
		&\frac{\max(1,\sup_{u\in \Chi^{n_k}(\prt^{n_k}(v))} |\lambda(\prt^{n_k}(u)\to u)|)}{|\lambda(\prt^{n_k}(v)\to v)|}\to \infty
		\end{aligned}
		\right\} &\text{ if $X=\ell^1(V)$;}
		\\[2mm]		
	   \left.
		\begin{aligned} &\sum_{u\in \Chi^{n_k}(v)} |\lambda(v\to u)|^{p^*} \to \infty\quad \text{and}\\ 
		&\frac{1+ \sum_{u\in \Chi^{n_k}(\prt^{n_k}(v))} |\lambda(\prt^{n_k}(u)\to u)|^{p^*} }{|\lambda(\prt^{n_k}(v)\to v)|^{p^*} }\to \infty
		\end{aligned}
		\right\} &\text{ if $X=\ell^p(V)$, $1<p<\infty$;}
		\\[2mm]
		\left.
		\begin{aligned} &\sum_{u\in \Chi^{n_k}(v)} |\lambda(v\to u)| \to \infty\quad \text{and}\\ 
		&\frac{1+ \sum_{u\in \Chi^{n_k}(\prt^{n_k}(v))} |\lambda(\prt^{n_k}(u)\to u)|}{|\lambda(\prt^{n_k}(v)\to v)|}\to \infty
		\end{aligned}
		\right\} &\text{ if $X=c_0(V)$.}
   	    \end{align*}
\end{enumerate}

{\rm (b)} $B_\lambda$ is mixing if and only if condition {\rm (iii)} holds for the full sequence $(n_k)_k=(n)_n$.
\end{theorem}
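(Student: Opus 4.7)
The plan is to reduce the statement to Theorem \ref{t-charHCunroot} via the conjugacy already set up in diagram \eqref{eq-conjB} together with the explicit formula \eqref{eq-conjBmu}. Since conjugacy via an isometric isomorphism preserves hypercyclicity, weak mixing, and mixing, it suffices to rewrite the conditions of Theorem \ref{t-charHCunroot} in terms of $\lambda$.

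First I would fix a reference vertex $v_0\in V$ and define the weight $\mu$ on $V$ by \eqref{eq-conjBmu}, so that $B_\lambda:X\to X$ is isometrically conjugate to $B:X_\mu\to X_\mu$. The key computational lemma I need is that for any $v\in V$ and any $u\in\Chi^n(v)$, the formula \eqref{eq-conjBmu} (applied to both $\mu_v$ and $\mu_u$, using the fact that intermediate factors in the products cancel) yields
\[
\mu_u = \frac{\mu_v}{\lambda(v\to u)}, \qquad \text{equivalently}\qquad \frac{1}{|\mu_u|}=\frac{|\lambda(v\to u)|}{|\mu_v|}.
\]
This is just a direct computation using \eqref{eq-lambda}; the minimality of $m,n$ in \eqref{eq-nm} is irrelevant because enlarging either index only introduces canceling factors.

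Using this identity, I would translate each of the three pairs of conditions in Theorem \ref{t-charHCunroot} into the corresponding pair in Theorem \ref{t-charHCunrootalt}. For the first condition of each pair the computation is immediate:
\[
\sum_{u\in\Chi^{n_k}(v)}\frac{1}{|\mu_u|^{p^*}}=\frac{1}{|\mu_v|^{p^*}}\sum_{u\in\Chi^{n_k}(v)}|\lambda(v\to u)|^{p^*},
\]
and since $\mu_v$ is a fixed nonzero scalar the divergence is equivalent to divergence of the $\lambda$-sum. For the second condition of each pair, I set $w=\prt^{n_k}(v)$, note that $v\in\Chi^{n_k}(w)$ and $\prt^{n_k}(u)=w$ for every $u\in\Chi^{n_k}(w)$, so that $\mu_w=\mu_v\,\lambda(\prt^{n_k}(v)\to v)$ and
\[
\frac{1}{|\mu_w|^{p^*}}+\sum_{u\in\Chi^{n_k}(w)}\frac{1}{|\mu_u|^{p^*}}=\frac{1}{|\mu_v|^{p^*}\,|\lambda(\prt^{n_k}(v)\to v)|^{p^*}}\Bigl(1+\sum_{u\in\Chi^{n_k}(w)}|\lambda(\prt^{n_k}(u)\to u)|^{p^*}\Bigr).
\]
Since $|\mu_v|$ is a fixed nonzero constant, the divergence to $\infty$ of the left-hand side is equivalent to the divergence to $\infty$ of the ratio appearing in Theorem \ref{t-charHCunrootalt}(iii). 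The $\ell^1$ case and the $c_0$ case are handled by the completely analogous translations, replacing the $p^*$-sum by a supremum (respectively by a $1$-sum) and $1$ by $\max(1,\cdot)$ (respectively $1$), matching the statement.

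The arguments are entirely parallel for part (b), since the conjugacy preserves the mixing property and Theorem \ref{t-charHCunroot}(b) characterizes it by the same conditions along the full sequence $(n_k)_k=(n)_n$. The only real subtlety is the careful bookkeeping in the translation formula above; there is no genuine obstacle, just the need to keep track of which vertex is the ``origin'' of each branch product so that the formula $\mu_u=\mu_v/\lambda(v\to u)$ is applied consistently (for the second condition, taking $w=\prt^{n_k}(v)$ rather than $v$ itself is essential to recover the precise quotient appearing in the theorem).
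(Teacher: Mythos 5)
Your proposal is correct and is exactly the argument the paper intends: Theorem \ref{t-charHCunrootalt} is stated as an immediate consequence of Theorem \ref{t-charHCunroot} obtained by ``shifting the weights from the space to the operator'' via the conjugacy \eqref{eq-conjB} with \eqref{eq-conjBmu}, and your translation $\mu_u=\mu_v/\lambda(v\to u)$ for $u\in\Chi^{n}(v)$, applied with base point $v$ for the first condition and $w=\prt^{n_k}(v)$ for the second, is precisely the bookkeeping that yields the stated conditions in all three cases. No gaps.
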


It is interesting to consider again as our first special case the Rolewicz operators, see Example \ref{ex-rol}. It is easy to see that these operators are never hypercyclic on the branchless unrooted tree $\mathbb{Z}$. But they may well be hypercyclic if the tree has some branches. And once more the space $\ell^1(V)$ plays a special role.

\begin{corollary}\label{c-rolunrooted}
Let $(V,E)$ be an unrooted directed tree. Let $\lambda\in\KK$ be a non-zero number and $\lambda B$ the corresponding Rolewicz operator.

{\rm (a)} The operator $\lambda B$ is never a hypercyclic operator on $\ell^1(V)$.

{\rm (b)} Let $X=\ell^p(V)$, $1<p<\infty$, or $X=c_0(V)$, and let $\lambda B$ be bounded on $X$. Then $\lambda B$ is hypercyclic on $X$ if and only if 
\begin{enumerate}[{\rm (i)}]
\item there is an increasing sequence $(n_k)_k$ of positive integers such that, for each $v\in V$,
\[
|\Chi^{n_k}(v)|\,|\lambda|^{n_kp^*} \to\infty 
\]
as $k\to\infty$,
\item if the tree has a free left end, then $|\lambda|<1$.
\end{enumerate} 
And $\lambda B$ is mixing on $X$ if and only if 
\begin{enumerate}[{\rm (i)}]
\item for each $v\in V$,
\[
|\Chi^{n}(v)|\,|\lambda|^{np^*} \to\infty 
\]
as $n\to\infty$,
\item if the tree has a free left end, then $|\lambda|<1$.
\end{enumerate} 
Here, $p^*=1$ for $X=c_0(V)$.
\end{corollary}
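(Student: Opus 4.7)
The plan is to specialize Theorem \ref{t-charHCunrootalt} to the constant weight $\lambda_v\equiv\lambda$, for which $\lambda(v\to u)=\lambda^n$ whenever $u\in\Chi^n(v)$. In the $\ell^p$- or $c_0$-setting of (b), the two characterizing conditions in part (a)(iii) of that theorem collapse respectively to
\[
|\Chi^{n_k}(v)|\,|\lambda|^{n_k p^*}\to\infty
\quad\text{and}\quad
|\lambda|^{-n_k p^*}+|\Chi^{n_k}(\prt^{n_k}(v))|\to\infty,
\]
for each $v\in V$ (with $p^*=1$ on $c_0(V)$). The first is exactly condition (i), so the work is to match the second condition with the dichotomy in (ii). For part (a), using the no-leaves assumption the first condition on $\ell^1(V)$ reduces to $|\lambda|^{n_k}\to\infty$, while the second reduces to $\max(1,|\lambda|^{n_k})/|\lambda|^{n_k}\to\infty$; whenever the first holds the second equals $1$, so they are incompatible and (a) follows.

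For the forward direction of (b), I would derive (ii) by exhibiting a ``thin'' vertex: if $\gen_k$ is a singleton for all $k\le n_0$, choose any $v$ with $[v]\le n_0$ (such a $v$ exists by taking a sufficiently distant ancestor of any vertex). Then every $\prt^{n}(v)$ lies in a singleton generation, so $\Chi^{n}(\prt^{n}(v))=\{v\}$ for all $n\ge 0$, and the second condition collapses to $|\lambda|^{-n_k p^*}+1\to\infty$, which forces $|\lambda|<1$. Conversely, if (i) and (ii) hold and the tree has a free left end, then (ii) gives $|\lambda|<1$, so $|\lambda|^{-n_k p^*}\to\infty$ along any sequence, the second condition is free, and (i) alone suffices.

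The main obstacle is the remaining case in the converse: the tree has no free left end and $|\lambda|\ge 1$, so I must show that condition (i) forces the second condition along the \emph{same} sequence $(n_k)$. For this I would establish the auxiliary lemma that if $(V,E)$ has no leaves and no free left end, then $c_n:=|\Chi^{n}(\prt^{n}(v))|\to\infty$ for every $v\in V$. The sequence $c_n$ is non-decreasing, since $\prt^n(u)=\prt^n(v)$ implies $\prt^{n+1}(u)=\prt^{n+1}(v)$, so if it were bounded it would stabilize from some $N_0$ onward at a value $L$, forcing $\Chi(\prt^{n+1}(v))=\{\prt^{n}(v)\}$ for every $n\ge N_0$. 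But then, for $u\in\gen_{[v]-n}$ with $n\ge N_0$, the minimal $k_0\ge 0$ with $\prt^{k_0}(u)=\prt^{n+k_0}(v)$ must be $0$, since otherwise $\prt^{k_0-1}(u)$ would be a second child of $\prt^{n+k_0}(v)$. Hence $\gen_{[v]-n}=\{\prt^n(v)\}$ for all large $n$, contradicting the no-free-left-end hypothesis. With the lemma in hand, $|\Chi^{n_k}(\prt^{n_k}(v))|\to\infty$ along the sequence from (i), the second condition holds, and Theorem \ref{t-charHCunrootalt} yields hypercyclicity. The mixing statements follow by running the same argument with $(n_k)=(n)_n$ and invoking Theorem \ref{t-charHCunrootalt}(b).
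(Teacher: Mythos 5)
Your proof is correct and takes essentially the same route as the paper: specialize Theorem \ref{t-charHCunrootalt} to the constant weight, note that the second condition becomes $|\lambda|^{-n_kp^*}+|\Chi^{n_k}(\prt^{n_k}(v))|\to\infty$, which is automatic when the tree has no free left end and is equivalent to $|\lambda|<1$ when it does, and observe the incompatibility of the two $\ell^1$-conditions for (a). The only difference is that you spell out (correctly) the auxiliary fact that leaflessness plus the absence of a free left end forces $|\Chi^{n}(\prt^{n}(v))|\to\infty$, which the paper asserts without proof.
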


\begin{proof} Assertion (a) is obvious from Theorem \ref{t-charHCunrootalt}. For (b), by that theorem, $\lambda B$ is hypercyclic if and only (i) holds and 
\begin{enumerate}
\item[{\rm (ii')}] \text{for each $v\in V$}, $\frac{1}{|\lambda|^{{n_k}p^*}} + |\Chi^{n_k}(\prt^{n_k}(v))|\to \infty$
\end{enumerate} 
as $k\to\infty$. Now, if the tree does not have a free left end then, for any $v\in V$, $|\Chi^{n}(\prt^{n}(v))|\to \infty$ as $n\to\infty$; note that by (i) the tree cannot have leaves. Hence condition (ii') is satisfied. On the other hand, if the tree has a free left end then there are $v\in V$ with $\Chi^{n}(\prt^{n}(v))=\{v\}$ for all $n\geq 0$; hence condition (ii') is equivalent to demanding that $|\lambda|<1$. The proof for the mixing property is the same.
\end{proof}

If, in particular, every vertex in an unrooted directed tree $(V,E)$ has exactly $N\geq 2$ children, then the Rolewicz operator $\lambda B$ is hypercyclic on $X$ (and then mixing) if and only if 
\begin{align*}
|\lambda|>N^{-1/p^*} & \text{ if $X=\ell^p(V)$, $1<p<\infty$};\\
|\lambda|>N^{-1} & \text{ if $X=c_0(V)$}.
\end{align*}
As already mentioned, if $N=1$, that is, when $V=\mathbb{Z}$, no Rolewicz operator is hypercyclic on any of the spaces considered here.

Applying Theorem \ref{t-charHCunrootalt} to weighted backward shifts on symmetric unrooted directed trees and with a symmetric weight, we get the following corollary.

\begin{corollary}
Let $(V,E)$ be a symmetric, unrooted directed tree and $\lambda=(\lambda_n)_{n\in \ZZ}$ a symmetric weight. Let $X=\ell^p(V)$, $1\leq p<\infty$, or $X=c_0(V)$, and suppose that $B_{\lambda}$ is bounded on $X$.

{\rm (a)} The operator $B_{\lambda}$ is hypercyclic on $X$ if and only if there is an increasing sequence $(n_k)_k$ of positive integers such that, for each $j\in \ZZ$, we have as $k\rightarrow \infty$,
\begin{align*}
		\lambda_{j-n_k+1}\cdots \lambda_{j}\rightarrow 0 \quad \text{and}\quad|\lambda_{j+1}\cdots \lambda_{j+n_k}|\rightarrow \infty\quad
		 &\text{ if $X=\ell^1(V)$;}
		\\[2mm]		
	   \left.
		\begin{aligned} &\min \left( \frac{1}{\gamma_{j-n_k}\cdots \gamma_{j-1}}, |\lambda_{j-n_k+1}\cdots \lambda_{j}|^{p^*}\right)\rightarrow 0 \text{ and}\\
		&\gamma_j\cdots \gamma_{j+n_k-1}|\lambda_{j+1}\cdots \lambda_{j+n_k}|^{p^*}\rightarrow \infty
		\end{aligned}
		\right\} &\text{ if $X=\ell^p(V)$, $1<p<\infty$, or $X=c_0(V)$};
\end{align*}
where $p^\ast=1$ for $X=c_0(V)$.

{\rm (b)} The operator $B_{\lambda}$ is mixing on $X$ if and only if, as $n\rightarrow \infty$,
\begin{align*}
		\lambda_{-n+1}\cdots \lambda_{0}\rightarrow 0 \quad \text{and}\quad|\lambda_{1}\cdots \lambda_{n}|\rightarrow \infty\quad
		 &\text{ if $X=\ell^1(V)$;}
		\\[2mm]		
	   \left.
		\begin{aligned} &\min \left( \frac{1}{\gamma_{-n}\cdots \gamma_{-1}}, |\lambda_{-n+1}\cdots \lambda_{0}|^{p^*}\right)\rightarrow 0 \text{ and}\\
		&\gamma_0\cdots \gamma_{n-1}|\lambda_{1}\cdots \lambda_{n}|^{p^*}\rightarrow \infty
		\end{aligned}
		\right\} &\text{ if $X=\ell^p(V)$, $1<p<\infty$, or $X=c_0(V)$};
\end{align*}
where $p^\ast=1$ for $X=c_0(V)$.
\end{corollary}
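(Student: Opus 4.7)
The plan is to apply Theorem \ref{t-charHCunrootalt} directly, using the symmetry of $(V,E)$ and of $\lambda$ to rewrite every quantity in its criterion in terms of the generational indices only.

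Fix $j\in\ZZ$ and $v\in\gen_j$. Because $(V,E)$ is symmetric and $\lambda$ is a function of the generation, for every $u\in\Chi^n(v)$ (which necessarily lies in $\gen_{j+n}$) the product $\lambda(v\to u)=\prod_{k=0}^{n-1}\lambda_{\prt^k(u)}$ collapses to $\lambda_{j+1}\cdots\lambda_{j+n}$ and is independent of $u$; moreover $|\Chi^n(v)|=\gamma_j\cdots\gamma_{j+n-1}$. In the same way $\prt^n(v)\in\gen_{j-n}$, and for any $u\in\Chi^n(\prt^n(v))\subseteq\gen_j$ one has $\prt^n(u)=\prt^n(v)$, so that $\lambda(\prt^n(u)\to u)=\lambda(\prt^n(v)\to v)=\lambda_{j-n+1}\cdots\lambda_j$, while $|\Chi^n(\prt^n(v))|=\gamma_{j-n}\cdots\gamma_{j-1}$.

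Substituting these identities into the two conditions in Theorem \ref{t-charHCunrootalt}(iii) turns the ``forward'' one literally into the first displayed formula of the corollary, e.g.\ $\gamma_j\cdots\gamma_{j+n_k-1}|\lambda_{j+1}\cdots\lambda_{j+n_k}|^{p^*}\to\infty$ in the $\ell^p$-case, and analogously for $\ell^1(V)$ and $c_0(V)$. The ``backward'' condition becomes, in the $\ell^p$-case with $1<p<\infty$,
\[
\frac{1}{|\lambda_{j-n_k+1}\cdots\lambda_j|^{p^*}}+\gamma_{j-n_k}\cdots\gamma_{j-1}\longrightarrow\infty,
\]
and since a sum of two non-negative quantities tends to $\infty$ if and only if their maximum does, equivalently if the minimum of their reciprocals tends to $0$, this is precisely
\[
\min\Big(\tfrac{1}{\gamma_{j-n_k}\cdots\gamma_{j-1}},\,|\lambda_{j-n_k+1}\cdots\lambda_j|^{p^*}\Big)\longrightarrow 0,
\]
as required. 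The $c_0(V)$-case is identical with $p^*=1$. For $\ell^1(V)$ the backward condition reads $\max(1,|\lambda_{j-n_k+1}\cdots\lambda_j|)/|\lambda_{j-n_k+1}\cdots\lambda_j|\to\infty$; since the numerator is at least $1$, this is equivalent to $1/|\lambda_{j-n_k+1}\cdots\lambda_j|\to\infty$, i.e.\ $\lambda_{j-n_k+1}\cdots\lambda_j\to 0$.

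Part (b) is obtained by specializing the same computation to the full sequence $(n)_n$ and invoking the mixing part of Theorem \ref{t-charHCunrootalt}. No step is really an obstacle here: the theorem demands its conditions for every $v\in V$, which under symmetry amounts to demanding them for every $j\in\ZZ$, so a single sequence $(n_k)_k$ in (a) (respectively the full sequence in (b)) is automatically available, and the corollary is no more than a bookkeeping translation of the theorem.
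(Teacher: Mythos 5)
Your part (a) is fine and is exactly the intended derivation: under symmetry every quantity in Theorem \ref{t-charHCunrootalt}(iii) depends only on the generation of $v$, the products $\lambda(v\to u)$ and $\lambda(\prt^{n}(u)\to u)$ collapse as you say, $|\Chi^{n}(v)|=\gamma_j\cdots\gamma_{j+n-1}$, and your max/min manipulations of the ``backward'' condition are correct; since the theorem asks for one sequence $(n_k)_k$ valid for all $v$, this is indeed the condition ``for each $j\in\ZZ$'' of the corollary.

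For part (b), however, there is a genuine (if small) gap caused by a misreading of the statement: the corollary's mixing criterion is phrased \emph{only at generation $0$} (it involves only $\lambda_{-n+1}\cdots\lambda_0$, $\lambda_1\cdots\lambda_n$, $\gamma_{-n}\cdots\gamma_{-1}$, $\gamma_0\cdots\gamma_{n-1}$), whereas your ``same computation with the full sequence'' only yields: $B_\lambda$ is mixing iff the conditions hold \emph{for every} $j\in\ZZ$ along the full sequence. The necessity of the $j=0$ condition is then immediate, but for sufficiency you must still show that the condition at $j=0$ propagates to every $j$. This is where the full sequence matters: for fixed $j$ the products at generation $j$ differ from those at generation $0$ only by the fixed non-zero factors $\gamma_0\cdots\gamma_{j-1}$ and $|\lambda_1\cdots\lambda_j|^{p^*}$ (or their reciprocals for $j<0$), which are finite because boundedness of $B_\lambda$ forces $\gamma_n|\lambda_{n+1}|^{p^*}\le\|B_\lambda\|^{p^*}<\infty$; e.g.\ $\gamma_j\cdots\gamma_{j+n-1}|\lambda_{j+1}\cdots\lambda_{j+n}|^{p^*}=\bigl(\gamma_0\cdots\gamma_{j+n-1}|\lambda_1\cdots\lambda_{j+n}|^{p^*}\bigr)/\bigl(\gamma_0\cdots\gamma_{j-1}|\lambda_1\cdots\lambda_j|^{p^*}\bigr)\to\infty$ as $n\to\infty$, and similarly $\min\bigl(\tfrac{1}{\gamma_{j-n}\cdots\gamma_{j-1}},|\lambda_{j-n+1}\cdots\lambda_j|^{p^*}\bigr)\le C\,\min\bigl(\tfrac{1}{\gamma_{-m}\cdots\gamma_{-1}},|\lambda_{-m+1}\cdots\lambda_0|^{p^*}\bigr)\to 0$ with $m=n-j$. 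Note that this re-indexing argument is \emph{not} available verbatim for a subsequence $(n_k)_k$ (divergence along $(n_k)$ does not give divergence along $(n_k+j)$), which is precisely why part (a) must be stated for every $j$ while part (b) can be stated at $j=0$ alone; your closing claim that the whole corollary is ``no more than a bookkeeping translation'' therefore overstates the case for (b), though the missing step is easy to supply.
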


Again, the symmetry of the directed tree is not needed if $X=\ell^1(V)$.

We next have an interesting consequence of Theorem \ref{t-charHCunrootalt} for general weighted shifts; an analogous result holds for the mixing property.

\begin{corollary}\label{c-simplifiedunrooted}
Let $(V,E)$ be an unrooted directed tree and $\lambda$ a weight. Let $X=\ell^p(V)$, $1\leq p<\infty$, or $X=c_0(V)$, and suppose that the weighted backward shift $B_\lambda$ is a bounded operator on $X$. 

{\rm (a)} Suppose that there is an increasing sequence $(n_k)_k$ of positive integers such that, for each $v\in V$,
\[
\lambda(\prt^{n_k}(v)\to v)\to 0
\]
and 	
\begin{align*}
		\sup_{u\in \Chi^{n_k}(v)} |\lambda(v\to u)| \to \infty, &\text{ if $X=\ell^1(V)$,}\\
    \sum_{u\in \Chi^{n_k}(v)} |\lambda(v\to u)|^{p^*} \to \infty, &\text{ if $X=\ell^p(V)$, $1<p<\infty$,}\\
    \sum_{u\in \Chi^{n_k}(v)} |\lambda(v\to u)| \to \infty, &\text{ if $X=c_0(V)$,}
\end{align*}
as $k\to\infty$, then $B_\lambda$ is hypercyclic. 

{\rm (b)} If $(V,E)$ has a free left end, then the above conditions characterize when $B_\lambda$ is hypercyclic.
\end{corollary}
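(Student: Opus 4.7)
The plan is to deduce both parts from Theorem~\ref{t-charHCunrootalt}, whose characterising condition~(iii) splits into a ``sum/sup'' half that coincides verbatim with the first hypothesis of the corollary in each of the three ambient spaces, and a ``ratio'' half. For part~(a), fix $v\in V$ and verify the ratio half: in every case the numerator is at least $1$, while the denominator $|\lambda(\prt^{n_k}(v)\to v)|^{p^*}$ (with $p^*=1$ for $c_0$, and $|\lambda(\prt^{n_k}(v)\to v)|$ for $\ell^1$) tends to $0$ by hypothesis, so the ratio tends to $\infty$. Theorem~\ref{t-charHCunrootalt} then yields hypercyclicity.

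For part~(b), assume $B_\lambda$ is hypercyclic and let $(n_k)_k$ be the sequence furnished by Theorem~\ref{t-charHCunrootalt}(iii); it suffices to extract from the ratio half the decay $\lambda(\prt^{n_k}(v)\to v)\to 0$ for every $v\in V$. Fix $v\in\gen_m$; by the free left end assumption there exists $n_1\in\ZZ$ such that $\gen_\ell$ is a singleton for every $\ell\le n_1$, so once $k$ is large enough $\prt^{n_k}(v)$ sits in the singleton part of the tree, and $\Chi^{n_k}(\prt^{n_k}(v))$ coincides with the whole generation $[v]$ of $v$. The crucial structural step is a factorisation through least common ancestors: for each $u\in[v]$ let $i(u):=\min\{i\ge 0:\prt^i(u)=\prt^i(v)\}$ and $w(u):=\prt^{i(u)}(v)$; for $n_k\ge i(u)$ the branches from the shared ancestor $\prt^{n_k}(v)=\prt^{n_k}(u)$ to $v$ and to $u$ agree until they reach $w(u)$, and identity~\eqref{eq-lambda} yields
\[
\lambda(\prt^{n_k}(u)\to u)=\lambda(\prt^{n_k}(v)\to v)\cdot\kappa(u),\qquad\kappa(u):=\frac{\lambda(w(u)\to u)}{\lambda(w(u)\to v)},
\]
where $\kappa(u)$ does not depend on $k$. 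Summing over $u\in[v]$ gives
\[
\sum_{u\in\Chi^{n_k}(\prt^{n_k}(v))}|\lambda(\prt^{n_k}(u)\to u)|^{p^*}=C_v\,|\lambda(\prt^{n_k}(v)\to v)|^{p^*},\qquad C_v:=\sum_{u\in[v]}|\kappa(u)|^{p^*},
\]
and $C_v<\infty$ because the left-hand side equals $\|S_{\overline{\lambda}}^{n_k}e_{\prt^{n_k}(v)}\|_{p^*}^{p^*}$, which is finite by the boundedness of $(B_\lambda^{n_k})^*=S_{\overline{\lambda}}^{n_k}$. The ratio half of Theorem~\ref{t-charHCunrootalt}(iii) now reads
\[
\frac{1}{|\lambda(\prt^{n_k}(v)\to v)|^{p^*}}+C_v\to\infty,
\]
forcing $\lambda(\prt^{n_k}(v)\to v)\to 0$. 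The $c_0$ case is identical with $p^*=1$; the $\ell^1$ case is analogous, replacing sum with supremum and $C_v$ with $D_v:=\sup_{u\in[v]}|\kappa(u)|$.

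The only genuine obstacle is isolating the least-common-ancestor factorisation. Once this is in hand, the apparently intricate ratio condition in Theorem~\ref{t-charHCunrootalt}(iii) collapses to a constant $C_v$ (or $D_v$) plus a term in which all $k$-dependence is concentrated in the single factor $\lambda(\prt^{n_k}(v)\to v)$, and the conclusion follows at once.
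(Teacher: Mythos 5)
Your proof is correct and follows essentially the same route as the paper: part (a) is immediate from Theorem~\ref{t-charHCunrootalt}, and for part (b) the free left end is used to factor the sum over $\Chi^{n_k}(\prt^{n_k}(v))$ as $|\lambda(\prt^{n_k}(v)\to v)|^{p^*}$ times a $k$-independent finite constant, which together with the ratio condition forces $\lambda(\prt^{n_k}(v)\to v)\to 0$. Your per-vertex least-common-ancestor factorisation is just a cosmetic variant of the paper's use of the single common ancestor $w=\prt^N(v)$ of the whole generation, and yields the same constant $C_v$.
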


\begin{proof}
Part (a) is obvious. As for (b), we only consider the case when $X=\ell^p(V)$, $1<p<\infty$. Let $v\in V$. Assuming  the second condition in Theorem \ref{t-charHCunrootalt}(iii) we need to show that 
\begin{equation}\label{eq-lam}
\lambda(\prt^{n_k}(v)\rightarrow v)\rightarrow 0
\end{equation}
as $k\rightarrow \infty$.
The generation of $v$ has a common ancestor, that is, there is some $w\in V$ and some $N\geq 0$ such that, for all $u\in V$ with $u\sim v$,
\[
w=\prt^N(u),
\]
and we have for all $n\geq N$ that
\[
\Chi^n(\prt^n(v)) = \Chi^N(w).
\]
We then deduce with \eqref{eq-lambda} that, for all $k$ with $n_k\geq N$,
\begin{align*}
\sum_{u\in \Chi^{n_k}(\prt^{n_k}(v))} |\lambda(\prt^{n_k}(u)\to u)|^{p^*}&= |\lambda(\prt^{n_k-N}(w)\to w)|^{p^*} \sum_{u\in \Chi^{N}(w)} |\lambda(w\to u)|^{p^*}\\
&= |\lambda(\prt^{n_k}(v)\to v)|^{p^*}\frac{\sum_{u\in \Chi^{N}(w)} |\lambda(w\to u)|^{p^*}}{|\lambda(w\to v)|^{p^*}}.
\end{align*}
This, together with the second condition in Theorem \ref{t-charHCunrootalt}(iii), implies \eqref{eq-lam}.
\end{proof}

We remark that assertion (b) in the corollary does not hold without the assumption that $(V,E)$ has a free left end. Indeed, by the discussion following Corollary \ref{c-rolunrooted}, the unweighted backward shift $B$ is hypercyclic on $c_0(V)$, say, for the unrooted directed tree in which each vertex has exactly two children, while the first condition in (a) is not satisfied.

Finally, Corollary \ref{c-simplifiedunrooted} and the proof of Corollary \ref{c-charHCrootalt} imply the following sufficient condition for hypercyclicity on particular unrooted trees; an analogous result holds for the mixing property. Recall the definition of $\Lambda_{v,n}$ given in \eqref{eq-Lambda}.

\begin{corollary}\label{c-charHCunrootalt}
Let $(V,E)$ be an unrooted directed tree and $\lambda$ a weight. Let  $B_\lambda$ be a bounded operator on $\ell^p(V)$, $1\leq p<\infty$. Suppose that every vertex has at most finitely many children. If there is an increasing sequence $(n_k)_k$ of positive integers such that, for each $v\in V$,
\[
\lambda(\prt^{n_k}(v)\to v)\to 0
\]
and
\[
\frac{1}{|\Chi^{n_k}(v)|^{1/p}}\Lambda_{v,n_k}\to \infty
\]
as $k\to\infty$, then $B_\lambda$ is hypercyclic.
\end{corollary}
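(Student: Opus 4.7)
The plan is to deduce this directly from Corollary \ref{c-simplifiedunrooted}(a), which gives a sufficient condition for hypercyclicity on unrooted trees consisting of two parts: (1) the decay condition $\lambda(\prt^{n_k}(v)\to v)\to 0$, which is given to us outright by hypothesis; and (2) the growth condition on the appropriate $p^*$-norm over $\Chi^{n_k}(v)$, namely $\sum_{u\in \Chi^{n_k}(v)}|\lambda(v\to u)|^{p^*}\to\infty$ for $1<p<\infty$, and $\sup_{u\in \Chi^{n_k}(v)}|\lambda(v\to u)|\to\infty$ for $p=1$.

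The task therefore reduces to verifying (2) from the assumed lower bound on $\Lambda_{v,n_k}/|\Chi^{n_k}(v)|^{1/p}$. Here I would follow the same application of Hölder's inequality used in the proof of Corollary \ref{c-charHCrootalt}(b): since every vertex has only finitely many children, $\Chi^{n_k}(v)$ is finite, and
\[
\Lambda_{v,n}=\sum_{u\in \Chi^{n}(v)}|\lambda(v\to u)|\leq \Big(\sum_{u\in \Chi^{n}(v)}|\lambda(v\to u)|^{p^*}\Big)^{1/p^*}|\Chi^{n}(v)|^{1/p},
\]
with the usual convention when $p=1$, namely $\Lambda_{v,n}\leq |\Chi^n(v)|\sup_{u\in \Chi^n(v)}|\lambda(v\to u)|$. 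Dividing by $|\Chi^{n_k}(v)|^{1/p}$ and letting $k\to\infty$, the assumption that the left-hand side tends to infinity forces the $p^*$-quantity on the right to tend to infinity as well, which is precisely condition~(2).

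With both conditions of Corollary \ref{c-simplifiedunrooted}(a) verified for the same sequence $(n_k)_k$, that corollary yields the hypercyclicity of $B_\lambda$. I do not expect any obstacle here: the argument is essentially a two-line reduction, combining an already-established sufficient condition for the unrooted case with the Hölder estimate that was the key step in the rooted corollary. The analogous statement for the mixing property follows by the identical reasoning applied to the full sequence $(n_k)_k=(n)_n$, invoking the mixing version of Corollary \ref{c-simplifiedunrooted}.
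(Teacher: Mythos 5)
Your proposal is correct and is exactly the paper's argument: the corollary is stated there as an immediate consequence of Corollary \ref{c-simplifiedunrooted} together with the H\"older estimate from the proof of Corollary \ref{c-charHCrootalt}(b), which is precisely the reduction you carry out (including the $p=1$ interpretation with the supremum).
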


Note that Martínez-Avenda\~{n}o \cite[Theorem 6.1]{Mar17} obtains yet another sufficient condition, see also Remark \ref{r-mart_suff}.

\section{Existence of hypercyclic weighted shifts}\label{s-exist}

We finally address the question mentioned at the end of Section \ref{s-root}: Given any (rooted or unrooted) directed tree, does it support a hypercyclic or even mixing weighted backward shift? Of course, the tree has to be leafless, see Remark \ref{r-leaf}, but this is the only geometric restriction. 

\begin{theorem}\label{HCmix-HCnmix}
Let $(V,E)$ be a leafless directed tree. Let $X=\ell^p(V)$, $1\leq p<\infty$, or $X=c_0(V)$.

{\rm (a)} There exists a mixing weighted backward shift on $X$.

{\rm (b)} There exists a hypercyclic non-mixing weighted backward shift on $X$.
\end{theorem}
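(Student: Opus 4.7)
My plan is to construct weights in a factored form adapted to the generations of the tree, so that the characterizing sums in Theorems \ref{t-charHCrootalt} and \ref{t-charHCunrootalt} depend only on the generation and not on the fine branching structure. Fix a reference vertex $v_0$, taken to be the root if the tree is rooted, so that every $u\in V$ has a well-defined generation $d(u)\in\NN_0$ or $\ZZ$. For each $v\in V$ enumerate its children as $\{c_{v,k}\}_k$ and write $\lambda_u=\alpha_{d(u)}\gamma_u$, where $\alpha=(\alpha_n)_n$ is a scalar sequence still to be chosen and the local factors $\gamma_{c_{v,k}}$ are normalized so that $\sum_k|\gamma_{c_{v,k}}|^q=1$ at every vertex, with $q=p^*$ for $X=\ell^p(V)$ ($1<p<\infty$), $q=1$ for $X=c_0(V)$, and $\sup_k|\gamma_{c_{v,k}}|=1$ in place of the sum for $X=\ell^1(V)$. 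Since the tree is leafless this normalization is always feasible: use a uniform distribution when $v$ has finitely many children and a geometric one when it has infinitely many. A one-step recursion combined with induction on $n$ then gives
\[
T_n(v):=\sum_{u\in\Chi^n(v)}|\lambda(v\to u)|^q\;=\;\prod_{j=1}^n|\alpha_{d(v)+j}|^q
\]
(and analogously with a supremum in the $\ell^1$ case), and by Proposition \ref{p-Bbounded} the operator $B_\lambda$ is bounded on $X$ as soon as $\alpha$ is bounded.

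For part (a), take $\alpha_n=2$ for $n\geq 1$ and, in the unrooted case, $\alpha_n=1/2$ for $n\leq 0$. Then $T_n(v)$ tends to infinity along the full sequence $(n)_n$ at every $v$, establishing the forward clause of condition (iii) of Theorem \ref{t-charHCrootalt} or \ref{t-charHCunrootalt}. In the unrooted case I also check the backward clause: since $|\gamma|\leq 1$ and $|\alpha_{d(v)-k}|=1/2$ once $k>d(v)$, the backward product $|\lambda(\prt^n(v)\to v)|=\prod_{k=0}^{n-1}|\alpha_{d(v)-k}|\,|\gamma_{\prt^k(v)}|$ is eventually dominated by $(1/2)^{n-d(v)}$ and tends to $0$, making the quotient in the second clause of (iii) tend to infinity. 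Hence $B_\lambda$ is mixing on $X$.

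For part (b), I keep the same local factors $\gamma_u$ and the same negative-generation values $\alpha_n=1/2$ (in the unrooted case), but replace the positive-generation $\alpha$'s by an oscillating sequence. Define non-negative integers $a_0=0$ with $|a_n-a_{n-1}|=1$ by concatenating blocks that climb from $0$ to $k$ and descend back to $0$, successively for $k=1,2,\dots$, and set $\alpha_n=2^{a_n-a_{n-1}}\in\{2,1/2\}$ for $n\geq 1$. Then $T_n(v_0)=2^{qa_n}$ satisfies $\limsup_n T_n(v_0)=\infty$ but equals $1$ for infinitely many $n$. Along the subsequence of peaks of $a$, the forward condition holds at every $v\in V$, since $|a_{d(v)+n}-a_n|\leq|d(v)|$ forces $a_{d(v)+n_k}\to\infty$ whenever $a_{n_k}\to\infty$; the backward clause is unchanged from part (a). Therefore $B_\lambda$ is hypercyclic. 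On the other hand, along the subsequence where $a_n=0$ one has $T_n(v_0)=1$, so the forward clause of (iii) fails for the full sequence $(n)_n$ and $B_\lambda$ is not mixing.

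The main technical obstacle is precisely the factorization of $T_n(v)$: for a general tree with variable or infinite outdegree, this quantity is an intricate sum (or supremum) over all $n$-step paths starting at $v$, and a priori depends on the entire branching of $V$ throughout the subtree rooted at $v$. The normalization $\sum_k|\gamma_{c_{v,k}}|^q=1$ is designed exactly so that the natural one-step recursion $T_n(v)=|\alpha_{d(v)+1}|^q\sum_k|\gamma_{c_{v,k}}|^q T_{n-1}(c_{v,k})$ telescopes, once one shows inductively that $T_{n-1}$ depends only on the generation of its argument, into the pure product displayed above. Once this reduction is in hand, the problem collapses to the classical analysis of weighted shifts on $\NN$ or $\ZZ$, and both parts follow by essentially classical oscillating-weight constructions.
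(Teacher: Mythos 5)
Your construction is correct, and for part (b) it takes a genuinely different route from the paper. The key idea — writing $\lambda_u=\alpha_{d(u)}\gamma_u$ with the local factors normalized so that $\sum_{u\in\Chi(v)}|\gamma_u|^{q}=1$ (resp.\ $\sup_{u\in\Chi(v)}|\gamma_u|=1$ for $\ell^1$) — makes the quantity $\sum_{u\in\Chi^n(v)}|\lambda(v\to u)|^{q}$ telescope into $\prod_{j=1}^n|\alpha_{d(v)+j}|^{q}$, a function of the generation alone; the one-step recursion is legitimate even for infinite outdegree (non-negative terms), and in the $\ell^1$ case the supremum factorizes because the inductive quantity is constant on each generation, so attainment of the sup is irrelevant. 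After this reduction, boundedness is just boundedness of $\alpha$ via Proposition \ref{p-Bbounded}, part (a) is the constant choice $2$ forward and $1/2$ backward, and part (b) is the classical oscillating-exponent weight on $\NN$ or $\ZZ$, with hypercyclicity along the peak subsequence (your inequality $|a_{d(v)+n}-a_n|\le|d(v)|$ handles all vertices at once) and non-mixing read off at $v_0$ from the ``if and only if'' in Theorems \ref{t-charHCrootalt}(b) and \ref{t-charHCunrootalt}(b); the backward clause in the unrooted case follows since the backward products tend to $0$, exactly as in Corollary \ref{c-simplifiedunrooted}. By contrast, the paper obtains (a) by planting a distinguished weight-$2$ child at every vertex of non-negative generation, and obtains (b) through a recursive construction: enumerating the vertices, alternating blocks of generations in which the mass emanating from $v_0$ is kept $\le 1$ (to kill mixing) with blocks in which weight-$2$ lineages are grafted onto descendants of $v_0,\dots,v_k$ (to force the sums past $2^k$), all while keeping the per-vertex sums $\le 3^{p^*}$ for boundedness. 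Your normalization trick buys a cleaner, essentially one-dimensional argument that reduces the problem to classical shifts on $\NN$ or $\ZZ$ and avoids the bookkeeping \eqref{eq-mj}--\eqref{eq-bdd}; the paper's construction is more hands-on but localizes the ``largeness'' on explicit branches, which is the same device it uses for part (a) and which adapts directly when one wants to prescribe behaviour at individual vertices rather than per generation.
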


\begin{proof}
We will do the proof for the space $X=\ell^p(V)$, $1<p<\infty$, on an unrooted leafless directed tree since the remaining cases are simpler. Fix a vertex $v_0 \in V$ and enumerate the generations with respect to $v_0$. 

(a) For any vertex $v\in \gen_n$, $n\leq -1$, we choose weights $\lambda_u$, $u\in \Chi(v)$, such that $\sum_{u\in \Chi(v)} |\lambda_u|^{p^*}\leq \frac{1}{2}$. On the other hand, for any vertex $v\in \gen_n$, $n\geq 0$, we fix a child $u_v$ of $v$ and put $\lambda_{u_v}=2$, while on the remainder of $\Chi(v)$ we choose weights such that $\sum_{u\in \Chi(v)} |\lambda_u|^{p^*}\leq 3^{p^*}$. Then $B_{\lambda}$ is bounded on $\ell^p(V)$ by Proposition \ref{p-Bbounded}, and it is easily seen by the mixing analogue of Corollary \ref{c-simplifiedunrooted} that $B_\lambda$ is mixing; the crucial point is that, starting from any $v\in V$, one can choose a branch going to infinity whose weights eventually all have the value 2.

(b) We start as in (a), which gives us $\lambda_u$ for any $u\in \gen_n$, $n\leq 0$, such that
\begin{equation}\label{eq-neg}
\sum_{u\in \Chi(v)} |\lambda_u|^{p^*}\leq \frac{1}{2},\quad v\in \bigcup_{n\leq -1} \gen_n.
\end{equation}
We construct the remaining weights recursively. For this, fix an enumeration $v_n$, $n\geq 1$, of $V\setminus\{v_0\}$. Let $N_n\in \mathbb{Z}$ be such that $v_n\in \gen_{N_n}$ for $n \geq 0$; note that $N_0=0$.

We now claim that there are sequences $(m_k)_{k\geq 0}$ and $(n_k)_{k\geq 0}$ of non-negative integers and weights $\lambda_v$, $v\in \bigcup_{n\geq 1} \gen_n$, such that, for any $k\geq 1$,
\begin{equation}\label{eq-mj}
m_k > n_{k-1} + \max(N_0,N_1,\ldots,N_{k}),
\end{equation}
\begin{equation}\label{eq-nj}
n_k > m_k,
\end{equation}
\begin{equation}\label{eq-nmix}
\sum_{u\in \Chi^{m_k}(v_0)} |\lambda(v_0\to u)|^{p^*}\leq 1,
\end{equation}
\begin{equation}\label{eq-hyp}
\sum_{u\in \Chi^{n_k}(v_j)} |\lambda(v_j\to u)|^{p^*}>2^k, j=0,\ldots,k; 
\end{equation}
moreover, for any $v\in \bigcup_{0\leq n < r_k} \gen_n$, $k\geq 0$,
\begin{equation}\label{eq-bdd}
\sum_{u\in \Chi(v)} |\lambda_u|^{p^*}\leq 3^{p^*},
\end{equation}
where $r_k:=n_k+\max(N_0,N_1,\ldots,N_k)$.

Let us first show that this implies the claim. Since $r_k\to\infty$ as $k\to\infty$, $\lambda_v$ is defined for all $v\in V$, and \eqref{eq-neg} and \eqref{eq-bdd} imply that the weighted backward shift $B_\lambda$ is bounded on $\ell^p(V)$. Now, it follows from \eqref{eq-neg} and \eqref{eq-hyp} with Corollary \ref{c-simplifiedunrooted} that $B_\lambda$ is hypercyclic, and \eqref{eq-nmix} shows with Theorem \ref{t-charHCunrootalt} that $B_\lambda$ is not mixing.

Thus it suffices to show recursively that \eqref{eq-mj}-\eqref{eq-bdd} may be satisfied for all $k\geq 0$. For $k=0$, simply take $m_0=n_0=0$; note that $r_0=0$. 

Now assume that $m_0,m_1,\ldots,m_{k}$, $n_0,\ldots, n_k$, and $\lambda_u$, $u\in \bigcup_{0< n \leq r_k} \gen_n$ have been defined for some $k\geq 0$ so that \eqref{eq-mj}-\eqref{eq-bdd} hold.
 
We choose
\[
m_{k+1} > n_k+\max(N_0,N_1,\ldots,N_{k+1}),
\]
hence $m_{k+1} >r_k$, and we define the weights on $\bigcup_{r_k<n\leq m_{k+1}}\gen_{n}$ in such a way that
\[
\sum_{u\in \Chi(v)} |\lambda_u|^{p^*}\leq 3^{p^*}, \quad v\in \bigcup_{r_k\leq n< m_{k+1}}\gen_{n}
\]
and
\[
\sum_{u\in \Chi^{m_{k+1}}(v_0)} |\lambda(v_0\to u)|^{p^*}\leq 1.
\]
Let 
\begin{equation}\label{eq-alpha}
\alpha_{j}:= \sum_{u\in \Chi^{m_{k+1}-N_j}(v_j)} |\lambda(v_j\to u)|^{p^*}, \quad j=0,\ldots,k+1;
\end{equation}
note that $m_{k+1}-N_j>0$ and $\Chi^{m_{k+1}-N_j}(v_j)\subset \gen_{m_{k+1}}$, so the right-hand side is well defined. Then choose
\[
n_{k+1}> m_{k+1}
\]
such that
\begin{equation}\label{eq-alphabis}
2^{(n_{k+1}-m_{k+1}+N_j)p^*}\alpha_{j} > 2^{k+1}, \quad j=0,\ldots,k+1.
\end{equation}
Write
\[
s_{k+1}= r_{k+1}-m_{k+1}=n_{k+1}- m_{k+1}+\max(N_0,N_1,\ldots,N_{k+1})> 0.
\]

Now consider $u\in \bigcup_{j=0}^{k+1}\Chi^{m_{k+1}-N_j}(v_j)\subset \gen_{m_{k+1}}$. It is not excluded that $u$ appears here as a descendant of several $v_j$, $0\leq j\leq k+1$. Then fix a lineage $u_1\in \Chi(u), u_2\in \Chi(u_1), \ldots, u_{s_{k+1}}\in \Chi(u_{s_{k+1}-1})$ and define $\lambda_{u_l}=2$ for $l=1,\ldots, s_{k+1}$. Note that $u_{s_{k+1}}\in \gen_{r_{k+1}}$.

It follows from our construction that any $v\in \bigcup_{m_{k+1}\leq n < r_{k+1}} \gen_n$ has at most one child for which we have defined the weight to be 2. Thus we can now choose the undefined weights in $\bigcup_{m_{k+1}< n \leq r_{k+1}} \gen_n$ in such a way that, for all $v\in \bigcup_{m_{k+1}\leq n < r_{k+1}} \gen_n$,
\[
\sum_{u\in \Chi(v)} |\lambda_u|^{p^*}\leq 3^{p^*}.
\]
It follows from \eqref{eq-alpha}, \eqref{eq-alphabis} with \eqref{eq-lambda} and the fact that $s_{k+1}\geq n_{k+1}-m_{k+1}+N_j$ for $j=0,\ldots, k+1$ that
\[
\sum_{u\in \Chi^{n_{k+1}}(v_j)} |\lambda(v_j\to u)|^{p^*}>2^{k+1},\quad j=0,\ldots, k+1.
\]

Altogether we have shown that \eqref{eq-mj}-\eqref{eq-bdd} are satisfied for $k+1$. 

This finishes the proof.

\end{proof}

\section{Outlook} Of course, this is not the end of a story but, as we hope, only a beginning. In future work we intend to study the question of chaos for the operators considered in this paper. Another, and in view of the results in \cite{BaRu15}, \cite{BoGE18} and \cite{Gro19} probably rather technical, problem is that of characterizing frequent hypercyclicity. Let us repeat our hope alluded to in the introduction that weighted shifts on directed trees might provide new and easier counter-examples in linear dynamics; the additional flexibility provided by the geometry of the tree should turn out to be helpful.

Martínez-Avenda\~{n}o and Rivera-Guasco \cite{MaRi20} have already looked at the hypercyclicity of the backward shift on Lipschitz spaces of directed trees. More generally, one would want a description of the dynamics of weighted backward shifts on very general Banach or Fréchet spaces over a directed tree. 

Let us finish with a specific problem. It is yet another well-known result of Salas \cite{Salas} that for any unilateral weighted backward shift $B_\lambda$ on $\ell^p(\NN)$, $1\leq p<\infty$, or $c_0(\NN)$, the operator $I+B_\lambda$ is hypercyclic. This was generalized by Grivaux and Shkarin; in particular, if $T$ is an operator on a separable Banach space $X$ for which $\bigcup_{n\geq 0} (\text{ker}(T^n)\cap \text{ran}(T^n))$ is dense in $X$, then $I+T$ is mixing, see \cite[Section 2.1]{BM09}. Since, for weighted backward shifts on rooted directed trees, the latter set contains all sequences of finite support, we have the following.

\begin{corollary} 
Let $(V,E)$ be a rooted directed tree. Then, for any weighted backward shift $B_\lambda$ on any of the spaces $\ell^p(V)$, $1\leq p<\infty$, or $X=c_0(V)$, the operator $I+B_\lambda$ is mixing.
\end{corollary}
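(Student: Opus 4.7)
The plan is to invoke the Grivaux--Shkarin criterion stated in the paragraph preceding the corollary: if $\bigcup_{n\geq 0}\bigl(\ker(T^n)\cap \operatorname{ran}(T^n)\bigr)$ is dense in the Banach space, then $I+T$ is mixing. Applied to $T=B_\lambda$, it is enough to show that every finitely supported element of $X$ lies in $\ker(B_\lambda^n)\cap \operatorname{ran}(B_\lambda^n)$ for all sufficiently large $n$, since $\vect\{e_v:v\in V\}$ is dense in each of $\ell^p(V)$ and $c_0(V)$.

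First I would nail down the action of $B_\lambda^n$ on the basis vectors and on finitely supported sequences. A quick induction from the definition gives
\[
B_\lambda^n e_u = \lambda(\prt^n(u)\to u)\, e_{\prt^n(u)}
\]
whenever $u$ has depth at least $n$ from $\rt$, and $B_\lambda^n e_u=0$ once $n$ exceeds the depth of $u$. Dually, for any $f\in X$ one has
\[
(B_\lambda^n f)(w)=\sum_{u\in \Chi^n(w)}\lambda(w\to u)\,f(u).
\]
From the second formula, if $f$ has finite support $F$ and $n$ exceeds $\max_{v\in F}d(v)$ (depth from the root), then every $u\in \Chi^n(w)$ has depth $\geq n$ and lies outside $F$, so $B_\lambda^n f=0$. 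From the first formula, for any $v\in V$ and any $u\in \Chi^n(v)$ (which exists because the tree is leafless) one has $e_v = B_\lambda^n\bigl(\lambda(v\to u)^{-1} e_u\bigr)\in \operatorname{ran}(B_\lambda^n)$.

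Putting the two halves together, for $f=\sum_{v\in F} c_v e_v$ with $F$ finite and $n>\max_{v\in F}d(v)$, linearity gives simultaneously $f\in \ker(B_\lambda^n)$ and $f\in \operatorname{ran}(B_\lambda^n)$. Hence the union $\bigcup_{n\geq 0}(\ker(B_\lambda^n)\cap \operatorname{ran}(B_\lambda^n))$ contains the dense subspace $\vect\{e_v:v\in V\}$, the Grivaux--Shkarin hypothesis is verified, and $I+B_\lambda$ is mixing.

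There is no real obstacle: the argument is a clean application of the cited criterion. The only point requiring a moment's thought is that the ``lifting'' step $e_v\in\operatorname{ran}(B_\lambda^n)$ needs $\Chi^n(v)\neq\varnothing$ for every $v$ and every $n$, which is precisely the standing leafless hypothesis of Sections \ref{s-root} and \ref{s-unrooted}.
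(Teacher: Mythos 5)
Your argument is correct and is exactly the paper's: the corollary is deduced from the Grivaux--Shkarin criterion by noting that, on a (leafless) rooted tree, every finitely supported vector lies in $\ker(B_\lambda^n)\cap\operatorname{ran}(B_\lambda^n)$ for $n$ large, and such vectors are dense. The paper states this in one sentence; you have merely filled in the same computation ($B_\lambda^n e_u=\lambda(\prt^n(u)\to u)e_{\prt^n(u)}$, vanishing once $n$ exceeds the depth, and lifting $e_v$ through a descendant in $\Chi^n(v)$), including the correct observation that leaflessness is what makes the range step work.
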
 

On the other hand, the orbits of the operator $I+2B$ on $\ell^2(\ZZ)$ are increasing in norm, so that the operator cannot be hypercyclic, while Salas \cite{Sal07} has found some perturbations of the identity by bilateral weighted shifts that are hypercyclic. 

\begin{problem}
(a) (Salas \cite{Sal07}) Characterize the weighted backward shifts $B_\lambda$ on $\ell^p(\ZZ)$, $1\leq p<\infty$, or $c_0(\ZZ)$ such that $I+B_\lambda$ is hypercyclic.

(b) Do the same for arbitrary unrooted directed trees.
\end{problem}

\textbf{Acknowledgement.} The second author was supported by the Ministry of Science and Higher Education of the Russian Federation, agreement No: 075–15–2019–1619.

The authors wish to thank the referee for his/her numerous helpful comments that greatly improved the presentation of this paper. We are also grateful for a very useful list of additional references that helped us to place our work in a wider context.

\end{document}